\documentclass[12pt]{article}

\usepackage{amssymb}%
\usepackage{amsmath}%
\usepackage{amsthm}%
\usepackage{fullpage}
\usepackage{booktabs}
\usepackage{float}
\usepackage{xcolor}%

\allowdisplaybreaks%

{\theoremstyle{plain}%
 \newtheorem{theorem}{Theorem}
 
 \newtheorem{lemma}{Lemma}%
}
{\theoremstyle{remark}
\newtheorem{remark}{Remark}
}
{\theoremstyle{definition}

}
\newcommand{\Li}{\operatorname{Li}}
\newcommand{\Cl}{\operatorname{Cl}}

\usepackage[
  colorlinks=true,
  linkcolor={blue!55!black},
  citecolor={teal!70!black},
  urlcolor={blue!55!black},
  linktoc=all,
  bookmarksopen=true,
  bookmarksnumbered=true,
  breaklinks=true,
  pdfstartview=FitH,
  pdftitle={Functional Dilogarithm Identities in Quadratic Fields},
  pdfauthor={Cetin Hakimoglu-Brown}
]{hyperref}
\usepackage[nameinlink,capitalize]{cleveref}

\crefname{equation}{Eq.}{Eqs.}
\Crefname{equation}{Equation}{Equations}

\begin{document}

\begin{center}
{\Large Functional Dilogarithm Identities in Quadratic Fields}

 \  

Cetin Hakimoglu-Brown\\
mathemails@proton.me

 \ 

\end{center}

\begin{abstract}
We derive three- and six-term functional dilogarithm identities whose arguments lie in
$\mathbb{Q}(u,\sqrt{4-3u^2})$ and $\mathbb{Q}(u,\sqrt{u(4-3u)})$. Our approach is based on
an integral-to-${}_4F_3$ correspondence that converts families of cubic and sextic
integrals into hypergeometric identities, providing a systematic method for constructing
functional equations for the dilogarithm over quadratic fields. We demonstrate the power
of this method by giving an analytic proof of the classical Loxton--Lewin
$2\cos(4\pi/9)$ identity, deriving a new family of dilogarithm ladders of quartic base
lying in $\mathbb{Q}(\sqrt{33})$, and proving conjectural two-term identities of Bytsko. As
a further application, we obtain rapidly convergent ${}_4F_3$ series for
$\mathrm{Cl}_2(\pi/3)$ and explicit relations connecting $\mathbb{Q}(\sqrt{13})$ and
$\mathbb{Q}(\sqrt{3})$. Finally, a PSLQ-based search over palindromic quartic units, we report  new ladder relations with
arguments built from $2\tan(\pi/8)\cos(\pi/5)$ and $\tan(3\pi/20)$, analogous to known trig identities by Watson, Loxton, and Gordon–McIntosh.
\end{abstract}

\noindent {\footnotesize \emph{MSC:} 33B30, 33B15.}

\vspace{0.1in}

\noindent {\footnotesize \emph{Keywords:} 
 dilogarithm, hypergeometric series, beta function, closed form, polylogarithm, Clausen function.}

\vspace{0.2in}
{\small\tableofcontents}
\newpage

\section{Preliminaries, Procedure, and Main Results}\label{sec:prelim}

 \noindent The \emph{dilogarithm function} is a central object of study in our work and is given by the $m = 2$ case of the 
 \emph{polylogarithm function} such that $\operatorname{Li}_{m}(x) := \sum_{n=1}^{\infty} \frac{x^{n}}{n^m}$ for  algebraic   $x$ 
 such that $|x| \leq 1$. 
\\
 
 \noindent Define: 
\begin{equation}\label{RogersLdefinition}
 \operatorname{L}(x)= \operatorname{Li}_{2}(x) + \frac{1}{2} \ln x \ln(1-x)  
\end{equation}
 to denote the Rogers $L$-function. We also have Roger's five-term identity:
\begin{equation}\label{rogers}
 \operatorname{L}(x) + \operatorname{L}(y) = \operatorname{L}(x y) + 
\operatorname{L}\left( \frac{x(1-y)}{1- x y} \right) + \operatorname{L}\left( \frac{y(1-x)}{1-xy} \right) 
\end{equation} 
In addition to the duplication formula: 
\begin{equation}\label{duplicationformula}
 2 \operatorname{Li}(x) + 2 \operatorname{Li}(-x) = \operatorname{Li}(x^{2}) 
\end{equation}
Euler's reflection identity: 
\begin{equation}\label{Eulerreflectionidentity}
 \operatorname{Li}_2(x) = -\operatorname{Li}_2(1 - x) + \frac{\pi^2}{6} - \ln(x) \ln(1 - x) 
\end{equation}
Landen's  identity: 
\begin{equation}\label{Landenformula}
 \operatorname{Li}_{2}(-x) = -\operatorname{Li}_{2}\left( \frac{x}{1+x} \right) - \frac{1}{2} \ln^2(1+x) 
\end{equation}

\noindent And the four-term relation: 
\begin{equation}\label{wellknown4term}
 \operatorname{Li}_2(x) - \operatorname{Li}_2(-x) + 
 \operatorname{Li}_2\left(\frac{1 - x}{1 + x}\right) - 
 \operatorname{Li}_2\left(\frac{x-1}{1 + x}\right) = \frac{\pi^2}{4} + \ln(x) \ln\left(\frac{1 + x}{1 - x}\right) 
\end{equation}

\remark For brevity, we will typically make use of the $\operatorname{L}$ notation for ladders and functional equations, although for certain results switch to $\operatorname{Li_2}$.

\subsection{Main theorems} 
 The principal contribution of this paper is an
integral-to-${}_4F_3$ framework for constructing functional
dilogarithm identities over quadratic fields. Unlike previous
approaches based on repeated applications of the five-term relation \eqref{rogers},
our method begins with integral representations whose hypergeometric
evaluations lead directly to functional equations. We illustrate the
method by giving analytic proofs of classical identities and by
deriving new quartic-base ladders. This builds off of results by Abouzahra and Lewin, Gordon and McIntosh, and Kirilov. For a full review of background material on the dilogarithm, we again refer to the seminal monographs due to 
 Lewin concerning this function \cite{Lewin1958,Lewin1981,Lewin1991text}. 

We state our main results

\begin{theorem}\label{firstformatladder}
\begin{multline}
 \operatorname{L}\left( \frac{u^2 + \sqrt{4 - 3u^2} - 2}{-u(u + 1)} \right) 
 - 3 \operatorname{L}\left( \frac{u^2 + \sqrt{4 - 3u^2} - 2}{u(u + 1)} \right) 
 - 4 \operatorname{L}\left( \frac{1 - u}{u + 1} \right) \\ 
 + \operatorname{L}\left( \frac{u^2 - \sqrt{4 - 3u^2} - 2}{-u(u + 1)} \right) 
 - 3 \operatorname{L}\left( \frac{u^2 - \sqrt{4 - 3u^2} - 2}{u(u + 1)} \right) 
 + \frac{3}{2} \operatorname{L}\left( \left( \frac{1 - u}{u + 1} \right)^2 \right)\in \pi^2 \mathbb{Q} 
\label{eq:ladder2} 
\end{multline}
\end{theorem}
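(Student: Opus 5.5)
We propose to prove \eqref{eq:ladder2} by differentiation: show that the left-hand side, viewed as a function of $u$, has zero derivative on a suitable real interval, and then find the constant by evaluating at a convenient point. This bypasses the integral-to-${}_4F_3$ machinery and uses only the definition \eqref{RogersLdefinition}. Since
\[
\frac{d}{dx}\operatorname{L}(x) = -\frac12\left(\frac{\ln(1-x)}{x}+\frac{\ln x}{1-x}\right),
\]
a combination $\sum_i c_i \operatorname{L}(x_i(u))$ is locally constant exactly when
\[
\sum_i c_i\Bigl(\ln(1-x_i)\,d\ln x_i-\ln x_i\,d\ln(1-x_i)\Bigr)=0 .
\]
This holds in particular if $\sum_i c_i\, x_i\wedge(1-x_i)=0$ in $\Lambda^2$ of the multiplicative group of $\mathbb{Q}(u,s)^\times$ modulo torsion, where $s=\sqrt{4-3u^2}$. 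Signs such as $-1$ are torsion and only contribute $\ln(-1)$-type constants, which disappear after differentiation provided all arguments stay real and below $1$ on the chosen interval.

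The main computation factors each argument and its complement into a fixed finite set of multiplicatively independent "primes" of $\mathbb{Q}(u,s)$. Put $x_\pm=(u^2\pm s-2)/(u(u+1))$, and consider the six arguments $-x_\pm$, $x_\pm$, $t=(1-u)/(1+u)$ and $t^2$. The key identity is
\[
(u^2-2)^2-s^2=u^2(u^2-1),
\]
which gives $x_+x_-=(u-1)/(u+1)=-t$. So $x_+$ and $x_-$ are conjugate with norm $-t$. Next we compute $1\mp x_\pm=\bigl(u(u+1)\mp(u^2\pm s-2)\bigr)/(u(u+1))$. We rationalize these via conjugates, using $s^2=4-3u^2$, to express them as products of powers of $u$, $u\pm1$, $2-u$, $2+u$ (which are the factors of $4-u^2$, appearing through $s^2-u^2=4(1-u^2)$, etc.), and a small number of genuinely irrational elements such as $u^2+s-2$ and $u+s$ together with their conjugates. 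We also use $1-t=2u/(1+u)$ and $1-t^2=4u/(1+u)^2$. Expanding all six wedges bilinearly in this basis and grouping conjugate terms should produce complete cancellation with the weights $1,-3,-4,1,-3,\tfrac32$.

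Checking this cancellation is the expected obstacle. The difficulty is choosing the right irrational generators, so that each $1\pm x_\pm$ factors cleanly; we would first try the elements $u\pm s$, $2u\pm s$ and $u^2-2\pm s$. If clean factorization fails, the fallback is to differentiate directly: substitute $ds/du=-3u/s$ and simplify the resulting logarithmic expression symbolically. Conjugate pairs should combine into rational functions of $u$ inside the logarithms.

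Once the derivative is known to vanish, we evaluate at $u=1$, approached from within the interval. There $s=1$ and $t=0$, so $x_+=0$, $x_-=-1$, and $-x_-=1$. The left-hand side becomes
\[
\operatorname{L}(0)-3\operatorname{L}(0)-4\operatorname{L}(0)+\operatorname{L}(1)-3\operatorname{L}(-1)+\tfrac32\operatorname{L}(0)=\frac{\pi^2}{6}+\frac{3\pi^2}{12}=\frac{5\pi^2}{12}.
\]
This uses $\operatorname{L}(-1)=\operatorname{Li}_2(-1)=-\pi^2/12$, obtained from \eqref{duplicationformula}. To pass the value at the endpoint to the whole interval, we need continuity of $\operatorname{L}$ at $0$ and $1$ and the assumption that no argument crosses a branch point on the interval. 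The conclusion is then that the combination lies in $\pi^2\mathbb{Q}$. For other intervals of $u$, the constant can change only by rational multiples of $\pi^2$ coming from branch terms.
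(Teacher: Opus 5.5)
Your argument is correct, with one convention that has to be fixed (see below), and it takes a genuinely different route from the paper's.

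\textbf{What the paper does.} The paper never differentiates. It shows that the two sextic integrals $w_1,w_2$ share the ${}_4F_3$ expansion $s_1$ via beta integrals. It then specializes $h=i/(u(1-u^2))$, so the denominator splits as $(u^2+ux+x^2-1)(u^2-ux+x^2-1)(u^2-x^2)$. Both sides are evaluated by partial fractions into dilogarithms, and the result is reduced using the four-term relation, duplication and symmetries.

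\textbf{What each route buys.} The paper's route explains where the six-term combination comes from, and the same machinery yields Theorem~2 and the appendix identities. However, its final reduction is only sketched as a ``long calculation''. Your symbol-plus-boundary-value argument is a self-contained verification. It needs none of the hypergeometric input and also pins the constant at $5\pi^2/12$, though it presupposes the identity rather than discovering it.

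\textbf{The cancellation you flag does hold.} It becomes mechanical if you rationally parametrize the conic $s^2+3u^2=4$ by $u=-4m/(m^2+3)$, $s=2(3-m^2)/(m^2+3)$. Then $x_+=m(m+1)/(m-3)$, $x_-=-(m+3)/(m(m-1))$ and $t=(m+1)(m+3)/((m-1)(m-3))$. Every $x_i$ and $1-x_i$ is $\pm$ a product of powers of $2,m,m\pm1,m\pm3,m^2+3$, and all coefficients of $\sum_i c_i\,x_i\wedge(1-x_i)$ vanish.

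\textbf{Two points must be made precise.}
\begin{itemize}
\item The interval has to be $1<u<2/\sqrt3$; for $0<u<1$ one has $-x_->1$. On $1<u<2/\sqrt3$ all six arguments lie in $(-1,1)\setminus\{0\}$, and the limit $u\to1^+$ gives your value.
\item You must take $L(x)=\operatorname{Li}_2(x)+\frac12\ln|x|\ln(1-x)$ for $x<0$. With the paper's principal branch, the $i\pi$ parts do not vanish after differentiation: they contribute $\frac{i\pi}{2}\sum_{x_i<0}c_i\,d\ln(1-x_i)$. Indeed $\Im$ of the left side equals $\frac{\pi}{2}\bigl(7\ln(1+u)-\ln u-10\ln 2\bigr)$ on that interval, which is not constant. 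So your sentence about $\ln(-1)$-type constants disappearing is wrong as stated. Your evaluation $L(-1)=\operatorname{Li}_2(-1)$ already uses the $\ln|x|$ convention, and under that convention the argument is sound.
\end{itemize}
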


\noindent This has reflection viz. \eqref{duplicationformula} and symmetrical  properties e.g. $\left(\frac{u^2 - \sqrt{4 - 3u^2} - 2}{u(u + 1)}\right)^{-1}
= \left(\frac{u^2 + \sqrt{4 - 3u^2} - 2}{u(u - 1)}\right)$ that makes it suitable for ladders.

\begin{theorem}\label{previouslymaster}
For $|u|<1$, we have
\begin{multline}
- \operatorname{Li}_2\!\left( \frac{2u^2 - 3u + \sqrt{u(4-3u)}}{2(u-1)^2} \right)
- \operatorname{Li}_2\!\left( \frac{2u^2 - 3u - \sqrt{u(4-3u)}}{2(u-1)^2} \right) \\
= \frac{3}{2} \ln^2\!\left( 1 - \frac{u - \sqrt{(4 - 3u)u}}{2(u-1)} \right)
- \operatorname{Li}_2\!\left( \frac{u}{u - 1} \right).
\label{eq:ladder}
\end{multline}
\end{theorem}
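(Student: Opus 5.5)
The plan is to treat \eqref{eq:ladder} as an identity between analytic functions of $u$ near $u=0$: show that both sides have the same derivative, check that they agree at $u=0$, and then extend by analytic continuation to $|u|<1$. I will not go through the ${}_4F_3$ machinery here.

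\textbf{Algebraic relations.} Write $s=\sqrt{u(4-3u)}$ and $x_\pm=\frac{2u^2-3u\pm s}{2(u-1)^2}$. Since $x_\pm$ are the two roots of a quadratic with coefficients in $\mathbb{Q}(u)$, a direct computation gives
\begin{equation*}
x_+x_-=\frac{u}{u-1},\qquad (1-x_+)(1-x_-)=\frac{1}{(1-u)^2},\qquad 1-x_\pm=\frac{2-u\mp s}{2(u-1)^2}.
\end{equation*}
Setting $t=1/(1-u)$, these read $x_+x_-=1-t$ and $(1-x_+)(1-x_-)=t^2$. So the right-hand term $\operatorname{Li}_2(u/(u-1))=\operatorname{Li}_2(1-t)$ is the dilogarithm of the product $x_+x_-$.

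I also note that
\begin{equation*}
1-\frac{u-s}{2(u-1)}=\frac{u-2+s}{2(u-1)},
\end{equation*}
which equals $-(u-1)(1-x_-)$ up to checking signs. Hence the squared logarithm on the right should be $\tfrac32\ln^2\bigl((1-u)(1-x_-)\bigr)$, or the same expression with $x_+$ (the two differ only by the sign of $s$). I would first nail down which root is meant, and then record this expression in terms of $\ln(1-x_\pm)$ and $\ln(1-u)$.

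\textbf{Derivative comparison.} Use $\frac{d}{du}\operatorname{Li}_2(x)=-\ln(1-x)\,\frac{x'}{x}$. The derivative of the left side is then
\begin{equation*}
\ln(1-x_+)\,(\ln x_+)'+\ln(1-x_-)\,(\ln x_-)'.
\end{equation*}
Split $\ln(1-x_\pm)=A\pm B$ with $A=-\ln(1-u)$ (from the product relation) and $B=\tfrac12\ln\frac{1-x_+}{1-x_-}$, and similarly split $(\ln x_\pm)'$ into the symmetric part $\tfrac12(\ln\frac{u}{u-1})'$ and the antisymmetric part $\tfrac12(\ln\frac{x_+}{x_-})'$. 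The symmetric–symmetric product matches, up to a factor, the derivative of $-\operatorname{Li}_2(u/(u-1))$. The remaining cross terms must assemble into
\begin{equation*}
3\ln\bigl((1-u)(1-x_\mp)\bigr)\cdot\bigl(\ln((1-u)(1-x_\mp))\bigr)'.
\end{equation*}
This reduces to checking a rational identity in $\mathbb{Q}(u,s)$ between the logarithmic derivatives $(\ln x_\pm)'$ and $(\ln(1-x_\pm))'$, which is routine after clearing $s^2=u(4-3u)$.

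\textbf{Initial value and main obstacle.} At $u=0$ we have $s=0$ and $x_\pm=0$, so the left side is $0$; likewise $\operatorname{Li}_2(0)=0$ and $\ln 1=0$, so both sides vanish and the constant of integration is $0$.

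The main obstacle is consistency of branches. For $-1<u<0$ the root $s$ is imaginary, and the individual logarithms must be chosen so that the split $\ln(1-x_\pm)=A\pm B$ holds exactly, not merely modulo $2\pi i$. I would handle this by working on a small disc around $u=0$, where $x_\pm$ and $1-x_\pm-1$ are small so principal branches are unambiguous, and then continuing analytically to $|u|<1$. This continuation requires checking that $x_\pm$ avoid $[1,\infty)$ there.

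If the cross-term bookkeeping proves messy, the fallback is to apply Rogers' five-term relation \eqref{rogers} with $(x,y)=(x_+,x_-)$. Its auxiliary arguments $x_\pm(1-x_\mp)/(1-x_+x_-)=x_\pm(1-x_\mp)(1-u)$ should simplify via the product relations, and then Landen \eqref{Landenformula} and reflection \eqref{Eulerreflectionidentity} would be used to collapse to the stated form.
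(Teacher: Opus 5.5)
Your argument is correct, but your main route differs from the paper's; the paper's proof is essentially your fallback.

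\textbf{The paper's route.} It applies Rogers' five-term relation to $(x,y)=(x_+,x_-)$. Then $xy=u/(u-1)$, and the auxiliary arguments reduce to $p_\pm=\frac{u\pm s}{2(u-1)}$. These satisfy $(1-p_+)(1-p_-)=1$, so they form a Landen pair and $\operatorname{Li}_2(p_+)+\operatorname{Li}_2(p_-)=-\tfrac12\ln^2(1-p_+)$. What remains is bookkeeping of the logarithms hidden in $\operatorname{L}$.

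\textbf{Your route.} You differentiate instead, and the ``routine rational identity'' you defer has a clean closed form. Set $\rho=(1-u)(1-x_+)=1-\frac{u-s}{2(u-1)}$. So your sign hedge resolves to $x_+$, although $(1-u)(1-x_-)=1/\rho$ gives the same $\ln^2$. One then has $\frac{1-x_+}{1-x_-}=\rho^2$ and $\frac{x_+}{x_-}=-\rho^3$. Hence $B=\ln\rho$ and $(\ln\frac{x_+}{x_-})'=3B'$, so the surviving term is $3BB'=(\tfrac32B^2)'$. The symmetric term equals $(-\operatorname{Li}_2(\frac{u}{u-1}))'$ exactly, not merely up to a factor. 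This makes the coefficient $\tfrac32$ transparent.

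\textbf{What each buys.} Your route gives a mechanical, self-checking proof with the constant fixed at $u=0$. It also never applies the five-term relation outside $(0,1)$; already for $u\in(0,1)$ the arguments $x_-$, $x_+x_-$ and $p_+$ are negative, so the logarithms in $\operatorname{L}$ need care. The paper's route gives a structural explanation, exhibiting the identity as a degenerate five-term relation, and needs no calculus.

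\textbf{Two repairs for your write-up.} First, $s$ has a branch point at $u=0$, so the individual terms are not analytic there. Compare derivatives on $(0,1)$, where $1-x_\pm>0$ and $\rho>0$, and let $u\to0^+$. Then use the invariance of both sides under $s\mapsto-s$ to get analyticity on the whole disc. Second, for the continuation: $x_\pm=x\in[1,\infty)$ would force $u=1$ or $|u|=x/(x-1)>1$, and $\rho\in(-\infty,0]$ would force $u\in(1,\tfrac43]$. So no branch cut is met when $|u|<1$.
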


\subsubsection{Key Results}

In \Cref{subsec:sqrt13cl2} we derive an unexpected relation between $\mathbb{Q}(\sqrt{13})$ , $\mathbb{Q}(\sqrt{3})$ , and   $\operatorname{Cl}_2\left(\frac{\pi}{3}\right)$ (Gieseking's constant) :

\begin{equation}
3\,\Im\operatorname{Li}_2(z)-\Im\operatorname{Li}_2(-z)
=\frac{11}{6}\,\operatorname{Cl}_2\!\left(\frac{\pi}{3}\right)
-\frac{\pi}{6}\,\cosh^{-1}\!\left(\frac{19}{6}\right),
\qquad
z=\frac{5-\sqrt{13}}{\sqrt{12}}\,e^{i\pi/6}.
\end{equation}

\noindent We  prove the dilogarithm ladder: 

$$ 3 \operatorname{Li}_{2}(r^6) + 3 \operatorname{Li}_{2}(r^4) 
 - 8 \operatorname{Li}_{2}(r^3) - 33 \operatorname{Li}_{2}(r^2) + 24 \operatorname{Li}_{2}(r) - \zeta(2) = 
 6 \log^2(r). $$ 

\noindent where \begin{equation*}
 r=\frac{1}{4} \left( -1 + \sqrt{33} - \sqrt{2 \left(9 - \sqrt{33} \right)} \right) 
\end{equation*}

\remark{The minimal polynomial of $r$ has all real roots, analogous to the trio of ladders found by Watson \cite{Watson1937}, Lewin-Loxton \cite{Lewin1982,Loxton}, and Gordon-McIntosh \cite{GordonMcIntosh1997}, whose ladder-producing cubic minimal polynomials also have all real roots. We discuss this in more detail in section 4. }

\subsection*{Branch conventions}
Throughout, $\log z$ denotes the principal branch of the logarithm, with
$\arg(z)\in(-\pi,\pi]$. The square root is defined by
\[
\sqrt{z}=\exp\!\left(\frac{1}{2}\log z\right),
\]
and therefore uses the same branch cut $(-\infty,0]$. The dilogarithm
$\operatorname{Li}_2(z)$ is defined by analytic continuation of its power
series on $|z|<1$ with branch cut $[1,\infty)$. All identities in this
paper are understood with respect to these conventions. Expressions such as $\log(e^{i\theta})$ are always
interpreted using the principal determination, so that
$\log(e^{i\theta})=i\theta$ for $\theta\in(-\pi,\pi]$.

\subsection{Comparison with existing literature}

\noindent If for \eqref{rogers} we let $x,y$ be functions of a single variable, $u$, we can iterate \eqref{rogers} to create functional identities  such that :
\begin{equation}
\begin{aligned}
\sum_{j=1}^N A_jL(f_j(u)) \in \pi^2 \mathbb{Q} 
\end{aligned}
\end{equation}
 
\noindent The usual method  of producing functional identities is through iteration of \eqref{rogers}.  By expressing $x,y$ as functions of $u$, we can enumerate arrays, which we define as $a_n$, of a 5-term relation, which for suitable choices of $x,y$, multiple arrays $(a_1,a_2...a_n)$ can under favorable conditions collapse into a ladder or other relation. However, proving and finding new ladder identities and functional equations is difficult, as iterating \eqref{rogers} produces many unique elements $f_j(u)$, with no clear way to group them into anything fruitful, or which substitutions or ordering of iterations to use. 
\\
\\
For example, Kirillov \cite{Kirillov1995} and Lewin \cite{Lewin1991text} independently derived:

\begin{equation}\label{eq:ladders}
\begin{aligned}
L\!\left(-z^7 \frac{1-z}{1+z}\right)
&= 2L\!\left(z^2(1-z)\right)
 + L\!\left(\frac{-z^3}{1-z^2}\right)
 + 2L\!\left(\frac{z^3}{1+z}\right) \\
&\quad + L\!\left(-z(1-z^2)\right)
 + \frac{7}{4}L(z^4)
 - \frac{9}{4}L(z^2)
 + \frac{1}{2}L\!\left(z\frac{1-z}{1+z}\right)
 - \frac{1}{2}L\!\left(-z\frac{1+z}{1-z}\right)
\end{aligned}
\end{equation}

\noindent

\noindent Kirillov used \eqref{eq:ladders}, along with other relations, to prove the quadratic-base ladders of Coxeter and Browkin in
$\mathbb{Q}(\sqrt{5})$ and $\mathbb{Q}(\sqrt{13})$, respectively. Lewin used \eqref{eq:ladders} to find a ladder relation with the base equation $1-w=w^5$, and dejectedly concluded it ``otherwise it leads to nothing new." Indeed, there does not seem to be any symmetry or other pattern to \eqref{eq:ladders}, limiting their utility. 
\\
\\
Moreover, numerous papers connect special values of ${}_3F_2$ and ${}_4F_3$
hypergeometric series with dilogarithmic evaluations, producing rapidly convergent
representations of polylogarithmic constants rather than bona fide functional identities. For example, Z.-W.\ Sun and Y. Zhou \cite{SunZhoupreprint},  (in 2024, at around the same time of the original draft of this paper) similarly derived relations between ${}_4F_3$ and the dilogarithm in fields $\mathbb{Q}(u,\sqrt{4-3u^2})$ $\mathbb{Q}(u,\sqrt{u(4-3u)})$. Other papers find integrals that evaluate to various dilogarithm relations. For example \cite{BatirDasireddy}:

\begin{align}
\int_{0}^{1}\frac{\log\big(u^2+q\big)}{1+u}{\rm d}u=\log 2\log(1+q)-\arctan^2\left(\frac{1}{\sqrt{q}}\right)+\frac{1}{2}\operatorname{Li}_2\left(\frac{1}{q+1}\right).
\end{align}

\noindent But the idea of creating a functional dilogarithm identity by relating a ``dilogarithm-producing integral" to a so-called ``partner integral" appears to be new. So this paper represents  to the best of our knowledge a new way of creating functional higher logarithmic   identities.

\subsection{Procedure}

Our framework is as follows:
\\
\\
1. We show that the ratio of two cubic- and sextic-degree integrals, denoted by $\frac{w_1}{w_2}$ equals a rational number, by converting  said integrals into equivalent ${}_{4}F_{3}$-series. because the series differ by a rational constant, by being shifted, we can equate the integrals.
\\
\\
2. By factoring and evaluating said integrals, and then combining elements by exploiting certain symmetry relations and \eqref{duplicationformula}--\eqref{wellknown4term}, it's possible to derive 3- and 6-term functional identities of the form $\mathbb{Q}(u,\sqrt{f(u)})$,  . 
\\
\\
3. We exploit the quadratic field structure of the factorization to construct new ladders and prove existing ones, in addition to finding other relations.

\section{Proofs of 3- and 6-term functional identities }\label{sectionsextic}

\subsection{Proof of Theorem 1}

Define 
\begin{align}
w_1 &= \int_0^1 \frac{\arctan\left(h(1-y)\sqrt{y}\right)}{2hy}\,dy 
= \int_{0}^{1} \frac{(3x^2-1)\ln x}{1 + h^2 x^2(1-x^2)^2}\,dx 
, \label{eq:w1} \\[10pt]
w_2 &= \int_0^1 \frac{\arctan\left(h(1-y)\sqrt{y}\right)}{h(1-y)}\,dy 
= \int_{0}^{1} \frac{(1-3x^2)\ln(1-x^2)}{1 + h^2 x^2(1-x^2)^2}\,dx 
 \label{eq:w2}
\end{align}
\\
\\
And define $s_1$ \ddag: 
\begin{equation}\label{displays1}
 s_1 = \frac{2}{3}\sum_{n=0}^{\infty} 
 h^{2n} \left( -\frac{4}{27} \right)^{n} 
 \left[ \begin{matrix} \frac{1}{2}, \frac{1}{2}, 1 
 \vspace{1mm} \\ 
 \frac{5}{6}, \frac{7}{6}, \frac{3}{2} \end{matrix} \right]_{n}. 
\end{equation}
\\
 The key insight is to show that $w_1=w_2$. Dilogarithm functional identities follow via integration. We prove this equivalency through a ``shifting'' applied related ${}_{4}F_{3}$-series $s_1$, by showing how $s_1=w_1=w_2$.

\begin{proof}
 
Consider the generalized integral: 
\begin{equation}\label{integrandforsum1}
 \int_{0}^{h} \frac{(1 - y)^c y^d}{1 + g^2 (1 - y)^a y^b} \, dg = (1 - y)^{c - \frac{a}{2}} y^{d - \frac{b}{2}} 
 \operatorname{arctan}\left( h (1 - y)^{\frac{a}{2}} y^{\frac{b}{2}} \right) 
 \end{equation}
 for $\Re\left( \frac{ \left( 1 - y \right)^{-a/2} y^{-b/2} }{h} \right) \neq 0$. We refer to such integrals as \emph{sextic integrals}
 \\
 \\
For \eqref{eq:w1} and \eqref{eq:w2}, for appropriate $c,d,a,b$ of \eqref{integrandforsum1}, we have the power series expansions, respectively: 
   \begin{equation}\label{addedexplanation2}
  \int_{0}^{1}  \int_{0}^{h} \frac{1-y}{2 h \sqrt{y} \left(1 + g^2 (1-y)^2 y \right)} dg\ dy
 = \frac{1}{2h }  \int_{0}^{1}  \int_{0}^{h}  \sum_{n=0}^{\infty} (-1)^{n} g^{2n} y^{n - \frac{1}{2}} (1-y)^{2n+1}  dg\ dy 
\end{equation}

 \begin{equation}\label{addedexplanation2b}
  \int_{0}^{1}  \int_{0}^{h} \frac{\sqrt{y}}{2 h  \left(1 + g^2 (1-y)^2 y \right)} dg\ dy
 = \frac{1}{2h }  \int_{0}^{1}  \int_{0}^{h}  \sum_{n=0}^{\infty} (-1)^{n} g^{2n} y^{n+1/2 } (1-y)^{2n}  dg\ dy 
\end{equation}

\noindent For \eqref{addedexplanation2}, the LHS inner integral evaluates as such:

\begin{equation}\label{addedexplanation1}
 \int_{0}^{h} 
 \frac{1-y}{2 h \sqrt{y} \left(1 + g^2 (1-y)^2 y \right)} \, dg =\frac{ \operatorname{arctan}\left( h (1 - y) \sqrt{y} \right) }{2 h y} 
\end{equation}

\noindent Perform term-wise integration on the inner integral of the RHS. Then use the beta integral \dag \ to convert the outer integral of the RHS into series:

\begin{equation}
\int_{0}^{1}\int_{0}^{h} \frac{1-y}{2h\sqrt{y}\left(1+g^2(1-y)^2 y\right)}\,dg\,dy 
= \frac{1}{2}\sum_{n=0}^{\infty} \frac{(-1)^{n} h^{2n}}{2n+1}
\cdot\frac{\Gamma\!\left(n+\frac{1}{2}\right)\,\Gamma(2n+2)}{\Gamma\!\left(3n+\frac{5}{2}\right)},
\qquad |h| < \frac{3\sqrt{3}}{2}.
\end{equation}

\noindent Simplification yields $s_1$. Repeating this procedure for \eqref{addedexplanation2b} shows how the hypergeoemtric series are equal.  
 \\
 \\
 Define $h = \frac{i}{u(1-u^2)}$, the denominator of \eqref{eq:w1} and  \eqref{eq:w2} factors as:  
\begin{equation}\label{productforpartial}
 u^2(1-u^2)^2-x^2(1-x^2)^2=(u^2+ux+x^2-1)(u^2-ux+x^2-1)(u^2-x^2) 
\end{equation}

\noindent Setting $w_1=w_2$ and using partial fraction decomposition on \eqref{eq:w1} and  \eqref{eq:w2} and evaluating the integrals, a long  calculation and using \eqref{wellknown4term} and elementary transformations to combine terms and exploit symmetries (see appendix), proves \Cref{firstformatladder}.   
\end{proof}

\noindent \dag More information about the ``beta technique" can be found in \cite{Batir2005} and \cite{Hakimoglupreprint} .

\noindent \ddag For the  ${}_{4}F_{3}$-series, we make use of the notational shorthand: 
\begin{equation*}
 \left[ \begin{matrix} \alpha, \beta, \ldots, \gamma \vspace{1mm} \\ 
 A, B, \ldots, C \end{matrix} \right]_{n} = \frac{ (\alpha)_{n} (\beta)_{n} 
 \cdots (\gamma)_{n} }{ (A)_{n} (B)_{n} \cdots (C)_{n}}. 
\end{equation*}

\subsection{Proof of Theorem 2.}

\begin{proof}
For \eqref{rogers}, let $$ w = \frac{2u^2 - 3u + \sqrt{(4 - 3u)u}}{2(u - 1)^2} \ \ \ \text{and} \ \ \ 
 z = \frac{2u^2 - 3u - \sqrt{(4 - 3u)u}}{2(u - 1)^2}. $$ By then applying the Landen identity, we obtain that 
 $$ \operatorname{Li}_2 \left(\frac{u + \sqrt{(4 - 3u) u}}{2 (u - 1)}\right) + \operatorname{Li}_2\left(\frac{u - 
 \sqrt{(4 - 3u) u}}{2 (u - 1)}\right) = -\frac{1}{2} \ln^2\left(1 - \frac{u + \sqrt{(4 - 3u) u}}{2 (u - 1)}\right). $$ 
 Similarly, for $ |u|<1$, we obtain the vanishing of 
\begin{multline*}
 \ln\left(\frac{-u + \sqrt{u (4-3u)} + 2}{2 - 2 u}\right) \ln\left(\frac{u + \sqrt{u (4-3u)} - 2}{2 (u - 1)}\right) + \\ 
 \ln^2\left(1 - \frac{u + \sqrt{(4 - 3 u) u}}{2 (u - 1)}\right) = 0. 
\end{multline*}
 This may be obtained by setting $$ a = \frac{2 - u + \sqrt{u(4 - 3u)}}{2 - 2u} \ \ \ \text{and} \ \ \ b = \frac{-2 + u + \sqrt{u(4 - 
 3u)}}{-2 + 2u}, $$ since the above equalities yield $(\ln(a)+\ln(b))^2-\ln^2(a)-\ln(a)\ln(b)=\ln^2(b) + 
 \ln(a)\ln(b)$. Because $ab=1$, we have that $(\ln(a)+\ln(b))^2=0$, so we have that $0= \ln^2(b)+\ln(a)\ln(b) + 
 \ln^2(a)+\ln(a)\ln(b)=(\ln(a)+\ln(b))^2=0$. 
\end{proof}

\noindent Our justification for the specific $w,z$ chosen above follows from the cubic integrals: 
\begin{equation}\label{w1w2forCampbell}
w_1 = \int_{0}^{1} \frac{\ln\left(1 - \frac{1}{u(1-u)^2} x(1-x)^2\right)}{1-x} \, dx \text{ and } w_2 = \int_{0}^{1} \frac{\ln\left(1 - \frac{1}{u(1-u)^2} x(1-x)^2\right)}{x} \, dx 
\end{equation}
 along with 
\[ -\frac{z}{6} \sum_{n=0}^{\infty} \left( \frac{4z}{27} \right)^n 
 \left[ \begin{matrix} 
 1, 1, \frac{3}{2} 
 \vspace{1mm} \\ 
 \frac{4}{3}, \frac{5}{3}, 2 \end{matrix} \right]_{n} = 
 \int_0^1 \frac{\ln(1 - z x (1 - x)^2)}{x} \, dx. 
\]
  Using the shifting and factoring method discussed earlier, we have $\frac{w_2}{w_1}=2$, and obtain:
  
  $$ \operatorname{L}\left( \frac{ 2-\sqrt{4u - 
 3 u^2} -u }{2} \right) + \operatorname{L}\left( \frac{2+\sqrt{4u-3 u^2} -u}{2} \right) + 
 \operatorname{L}(u) \in \pi^2 \mathbb{Q}$$

\noindent Applying the Landen transformation to $ \frac{ 2-\sqrt{4u - 
 3 u^2} -u }{2} $, we obtain  $\frac{2u^2 - 3u - \sqrt{4u - 3u^2}}{2(u - 1)^2}$.

\remark Evaluating \eqref{integrandforsum1} and \eqref{w1w2forCampbell} at other $a,b$ or exponents either produces 
polynomials that do not factor over quadratic fields, or are trivial. Consequently, they lack the
symmetry and reflection properties needed to generate dilogarithm ladders or to compress
into concise functional identities. Although identities may still exist in such cases,
they often involve many terms rather than the six-term relations of \Cref{firstformatladder} or three-term relations of \Cref{previouslymaster}, with no
apparent mechanism for further reduction. In other instances, the resulting identities
are degenerate and fail to yield genuine dilogarithmic relations. Or reduce to
Theorems~1 and~2 through elementary transformations that produce no new
results.
\section{Applications} 

\subsection{A full solution to a problem due to Sun}
 As it turns out, the $u = \frac{1}{\sqrt{3}}$ case Theorem \ref{maintheorem} can be used to formulate a full solution to a problem due 
 to Sun \cite{Sun2021,Sun2015} that was partially solved by Campbell \cite{Campbell2023}, as later explained. 

\begin{theorem}\label{theoremSunCampbell}
 The closed-form evaluation
 $$ 8 \operatorname{Li}_2\left(2-\sqrt{3}\right)-\operatorname{Li}_2\left(4 \sqrt{3} - 7 \right) 
 = \frac{5 \pi ^2}{12}+\ln \left(2-\sqrt{3}\right) \ln \left(2+\sqrt{3}\right) $$
 holds true.
\end{theorem}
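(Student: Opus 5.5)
The plan is to specialize \Cref{firstformatladder} at $u=1/\sqrt{3}$ (the case alluded to above). Write $t=2-\sqrt{3}$, so $t^{-1}=2+\sqrt{3}$. The statement then reads
\[
8\operatorname{Li}_2(t)-\operatorname{Li}_2(-t^2)=\frac{5\pi^2}{12}-\ln^2 t ,
\]
because $4\sqrt3-7=-t^2$ and $\ln(2-\sqrt3)\ln(2+\sqrt3)=-\ln^2 t$.

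At $u=1/\sqrt3$ we have $4-3u^2=3$, so the square root in \eqref{eq:ladder2} becomes $\sqrt3$. A direct computation, rationalizing by $\sqrt3-1$, gives
\[
\frac{1-u}{1+u}=t,\qquad \frac{u^2+\sqrt3-2}{u(u+1)}=t^2,\qquad \frac{u^2-\sqrt3-2}{u(u+1)}=-t^{-1}.
\]
Hence \Cref{firstformatladder} collapses to
\[
\operatorname{L}(-t^2)-3\operatorname{L}(t^2)-4\operatorname{L}(t)+\operatorname{L}(t^{-1})-3\operatorname{L}(-t^{-1})+\tfrac32\operatorname{L}(t^2)\in\pi^2\mathbb{Q}.
\]

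The next step is to bring every argument into $(-1,1)$.
\begin{itemize}
\item For $\operatorname{L}(-t^{-1})$, use the inversion formula $\operatorname{Li}_2(-1/x)=-\operatorname{Li}_2(-x)-\frac{\pi^2}{6}-\frac12\ln^2 x$ with $x=t$.
\item For $\operatorname{L}(t^{-1})$, which sits on the branch cut, take the real part of the inversion formula: $\Re\operatorname{Li}_2(1/t)=-\operatorname{Li}_2(t)+\frac{\pi^2}{3}-\frac12\ln^2 t$. The factor $\ln(1-1/t)$ in the Rogers term must be handled with the branch conventions; only real parts are used.
\item Apply the duplication formula \eqref{duplicationformula}, $\operatorname{Li}_2(t^2)=2\operatorname{Li}_2(t)+2\operatorname{Li}_2(-t)$, to remove $\operatorname{Li}_2(t^2)$.
\end{itemize}
This leaves a linear combination of $\operatorname{Li}_2(t)$, $\operatorname{Li}_2(-t)$ and $\operatorname{Li}_2(-t^2)$, plus logarithmic terms. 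Since $1-t=\sqrt3\,(\sqrt3-1)\cdot t^{1/2}\cdots$ lies in $\mathbb{Q}(\sqrt3)$, the logarithms are products of $\ln t$, $\ln 2$, $\ln 3$ and $\ln(\sqrt3\pm1)$, and they should combine into multiples of $\ln^2 t$ using $1-t=\sqrt3-1$, $1+t=3-\sqrt3=\sqrt3(\sqrt3-1)$ and $(\sqrt3-1)^2=2t$.

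The main obstacle is that the target contains no $\operatorname{Li}_2(-t)$, while the specialization above may leave some. I would first check whether the $\operatorname{Li}_2(-t)$ coefficients cancel outright. The count $-3\cdot\frac32\cdot 2$ from $\operatorname{L}(t^2)$ against $+3$ from the inversion of $\operatorname{L}(-t^{-1})$ suggests they may not. If they do not, I would supply one more relation from the same field. The first candidate is Landen \eqref{Landenformula}, noting that $t/(1+t)=(\sqrt3-1)/(2\sqrt3)$ and that $1-t/(1+t)=1/(1+t)$. The second candidate is the four-term relation \eqref{wellknown4term}, noting that $(1-t)/(1+t)=1/\sqrt3$. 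Combining these should trade $\operatorname{Li}_2(-t)$ for $\operatorname{Li}_2(t)$ and $\operatorname{Li}_2(-t^2)$ modulo elementary terms.

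The overall constant in $\pi^2\mathbb{Q}$ is fixed by tracking the $\pi^2/6$ and $\pi^2/3$ contributions from the inversions. As a check, it can be confirmed numerically to modest precision: the rational in front of $\pi^2$ has small denominator, so a few digits determine it, which pins it to $\frac{5}{12}$. Finally, I would verify that the accumulated logarithmic terms reduce exactly to $-\ln^2 t=\ln(2-\sqrt3)\ln(2+\sqrt3)$, which completes \Cref{theoremSunCampbell}.
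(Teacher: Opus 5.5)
Your specialization is right and is a genuinely different route from the paper's. The computation also closes more easily than you expect. One step, the value of the constant, is not established as written, so there is a real but easily repaired gap.

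\textbf{The computation.} At $u=1/\sqrt3$ the six arguments are indeed $-t^2,\,t^2,\,t,\,t^{-1},\,-t^{-1},\,t^2$. You must read every $\operatorname{L}$ in Theorem~\ref{firstformatladder} as its real part, not only $\operatorname{L}(t^{-1})$. With principal branches the combination has a nonzero imaginary part at this $u$. Equivalently, use $\Re\operatorname{L}(-x)=-\operatorname{L}(x/(1+x))$ for $x>0$ and $\Re\operatorname{L}(1/x)=2\operatorname{L}(1)-\operatorname{L}(x)$ for $0<x<1$.

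With that reading no $\operatorname{Li}_2(-t)$ is left over. The net coefficient of $\operatorname{L}(t^2)$ is $-3+\tfrac32=-\tfrac32$, so duplication produces $-3\operatorname{Li}_2(-t)$. This cancels exactly against the $+3\operatorname{Li}_2(-t)$ from inverting $-3\operatorname{L}(-t^{-1})$. Your Landen/four-term fallback is therefore unnecessary; as described it would only bring in the new arguments $(3-\sqrt3)/6$ and $\pm1/\sqrt3$. Using $1+t^2=4t$ and $(1-t)^4=4t^2$, the logarithms collapse and you get
\[
\operatorname{L}(-t^2)-\tfrac32\operatorname{L}(t^2)-4\operatorname{L}(t)+\operatorname{L}(t^{-1})-3\operatorname{L}(-t^{-1})
=\operatorname{Li}_2(-t^2)-8\operatorname{Li}_2(t)+\tfrac{5\pi^2}{6}-\ln^2 t .
\]

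\textbf{The gap.} The target is therefore equivalent to the constant $c\in\pi^2\mathbb{Q}$ supplied by Theorem~\ref{firstformatladder} being equal to $\tfrac{5\pi^2}{12}$. This is where the proposal falls short, for three reasons:
\begin{itemize}
\item Theorem~\ref{firstformatladder} does not specify $c$.
\item The $\pi^2/3$ and $\pi^2/6$ from your inversions account only for the $\tfrac{5\pi^2}{6}$ in the display above, not for $c$.
\item Matching $\tfrac{5}{12}$ to a few digits cannot identify a rational whose denominator you have not bounded.
\end{itemize}
The repair is short. Let $u\to1^-$ in Theorem~\ref{firstformatladder}; the six arguments tend to $0,0,0,1,-1,0$. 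Hence $c=\operatorname{L}(1)-3\operatorname{L}(-1)=\tfrac{\pi^2}{6}+\tfrac{\pi^2}{4}=\tfrac{5\pi^2}{12}$, using $\operatorname{L}(-1)=\operatorname{Li}_2(-1)=-\tfrac{\pi^2}{12}$. The real-part combination is continuous in $u$ on $[1/\sqrt3,1]$ and takes values in the countable set $\pi^2\mathbb{Q}$, so it is constant on that interval. With this added, your proof is complete.

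\textbf{Comparison with the paper.} The paper instead specializes the uncompressed appendix identity, Theorem~\ref{maintheorem}, at $u=1/\sqrt3$, where every constant is explicit. The price is that its right-hand side involves $\operatorname{Li}_2(\pm\sqrt3)$ and $\operatorname{Li}_2(\pm\sqrt3/2)$. These must be converted with the four-term relation at $z=\sqrt3$ and $z=\sqrt3/2$ before the same duplication step removes $\operatorname{Li}_2(-t)$. Your route is shorter: at $u=1/\sqrt3$, Theorem~\ref{firstformatladder} already lands on powers of $t$ and needs only inversion and duplication. However, it inherits that theorem's unspecified constant and branch convention, which is exactly what the extra step above has to supply.
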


\begin{proof}
 For the $u = \frac{1}{\sqrt{3}}$ case Theorem \ref{maintheorem}, 
 the left-hand side of \eqref{mainABCD} 
 may be expressed in terms of 
 $ -2 \operatorname{Li}_2 \left( 7 - 4 \sqrt{3} \right) + 
 4 \operatorname{Li}_2 \left( \sqrt{3} - 2 \right) $ and a closed-form expression, 
 and this follows from the known relation 
\begin{equation*}
-2 \operatorname{Li}_2 \left( \frac{3 - \sqrt{3}}{6} \right) = 2 \operatorname{Li}_2 \left( \sqrt{3} - 2 \right) + 
 \ln^{2} \left( \frac{3 + \sqrt{3}}{6} \right). 
\end{equation*}
 The right-hand side of \eqref{mainABCD}, for the same input value 
 $u = \frac{1}{\sqrt{3}}$ may be rewritten as 
 $$ K = \operatorname{Li}_2\left(-\sqrt{3}\right)-\frac{1}{2}
 \operatorname{Li}_2\left(-\frac{\sqrt{3}}{2}\right) + 
 \frac{\operatorname{Li}_2\left(\frac{\sqrt{3}}{2}\right)}{2}-\operatorname{Li}_2\left(\sqrt{3}\right). $$
 Using the well known $4$- term functional equation 
\begin{equation}\label{wellknown4termB}
 \operatorname{Li}_2(z) - \operatorname{Li}_2(-z) + 
 \operatorname{Li}_2\left(\frac{1 - z}{1 + z}\right) - 
 \operatorname{Li}_2\left(\frac{-(1 - z)}{1 + z}\right) = \frac{\pi^2}{4} + \ln(z) \ln\left(\frac{1 + z}{1 - z}\right), 
\end{equation}
 we obtain, by setting $z = \frac{\sqrt{3}}{2}$ and $z=\sqrt{3}$, that the right-hand side of \eqref{mainABCD} may be rewritten as 
\begin{equation*}
 \operatorname{Li}_2\left(7 - 4 \sqrt{3}\right) - \operatorname{Li}_2\left(4 \sqrt{3} - 7 \right) - 
 2 \, \operatorname{Li}_2\left(\sqrt{3} - 2\right) + 2 \, \operatorname{Li}_2\left(2 - \sqrt{3}\right). 
\end{equation*}
 Equating our equivalent expressions for the right-hand and left-hand sides of \eqref{mainABCD}, we obtain 
\begin{equation*}
 3 \, \operatorname{Li}_2\left(7 - 4 \sqrt{3}\right) - 
 \operatorname{Li}_2\left(4 \sqrt{3} - 7\right) - 
 6 \, \operatorname{Li}_2\left(\sqrt{3} - 2\right) + 2 \, \operatorname{Li}_2\left(2 - \sqrt{3}\right)
\end{equation*}
 and 
\begin{equation*}
 3 \, \operatorname{Li}_2\left(7 - 4 \sqrt{3}\right) = 6 \, 
 \operatorname{Li}_2\left(2 - \sqrt{3}\right) + 6 \, \operatorname{Li}_2\left(\sqrt{3} - 2\right), 
\end{equation*}
 and this gives us an equivalent version of the desired result. 
\end{proof}

\noindent  It was shown by Campbell \cite{Campbell2023} that the open problem due to Sun \cite{Sun2021,Sun2015} given by proving the 
 conjectured formula 
\begin{equation}\label{Sunconjectureharmonic}
 \sum_{n=0}^{\infty} \frac{ \binom{2n}{n} }{ (2n+1) 8^{n} } \left( \sum_{0 \leq k < n} 
 \frac{(-1)^k}{2k+1} - \frac{(-1)^n}{2n+1} \right) = -\frac{\sqrt{2}}{16} \pi^2 
\end{equation}
 is equivalent to the problem of proving the purported closed-form evaluation 
\begin{equation}\label{discoveredwithAu}
 \operatorname{Li}_{2}\left( \frac{\sqrt{3} + 2}{4} \right) - 8 \operatorname{Li}_{2}(2 - \sqrt{3}) 
 = -\frac{\pi^2}{4} - 2 \ln^{2}(2) + \frac{5}{2} \ln^2(\sqrt{3} + 2) - 2 \ln(\sqrt{3} + 2) \ln 2 
\end{equation}
 discovered experimentally using a Mathematica package given by Au, and referring to Campbell's work \cite{Campbell2023} for details. 
 It can be shown that Theorem \ref{theoremSunCampbell} is equivalent to \eqref{discoveredwithAu}, thus providing a full proof of Sun's 
 conjectured formula in \eqref{Sunconjectureharmonic}. 

\subsection{A relation between $\mathbb{Q}[\sqrt{13}]$,  $\mathbb{Q}[\sqrt{3}]$ , and $\mathrm{Cl}_2\!\left(\tfrac{\pi}{3}\right)$}\label{subsec:sqrt13cl2}

We find  a relation between ${Q}[\sqrt{13}]$ and ${Q}[\sqrt{3}]$ that gives a fast series for $\rm{Cl}_2\big(\tfrac{\pi}3\big)$ that bypasses the summation of the multiple angle formula.  
\\
\\
First, by setting $u=  i\sqrt{3}$ in \eqref{eq:w1}, we have:
\begin{equation}\label{eq:u-isqrt3}
\frac{1}{6\sqrt{3}}\sum_{n=0}^{\infty}  \left(  \frac{-1}{324}  \right)^{n} \frac{ (1/2)_n(1/2)_n(1)_n}{(3/2)_n(7/6)_n(5/6)_n}   =4 \sqrt{3} \int_{0}^{1} \frac{(3x^2 - 1)\ln(x)}{48 + x^2(1 - x^2)^2} \, dx
\end{equation}

\noindent Using Theorem \ref{maintheorem} , for the RHS of  (where $\Im$ is the imaginary part), we have:
\begin{equation}
\\\Im \left(\text{Li}_2\left(\frac{i}{\sqrt{3}}\right) - \text{Li}_2\left(\frac{-i}{\sqrt{3}}\right)\right)=   \frac{5}{3} \, \Im\text{Li}_2\left(\frac{1 + i\sqrt{3}}{2}\right) - \frac{\pi \ln(3)}{6}  
\end{equation}
\\
On the RHS, using \eqref{wellknown4term} setting, $ x=2/(i \sqrt{3}-\sqrt{13})$, we obtain:
\begin{equation}
\begin{aligned}
\text{Li}_2\left(-\frac{1}{12} \left(3 + i\sqrt{3}\right)\left(5 + \sqrt{13}\right)\right) - \text{Li}_2\left(\frac{1}{12}\left(3 + i\sqrt{3}\right)\left(5 + \sqrt{13}\right)\right)
\ \\
= \text{Li}_2\left(-\frac{2i + \sqrt{3} + i\sqrt{13}}{-2i + \sqrt{3} + i\sqrt{13}}\right) - \text{Li}_2\left(\frac{2i + \sqrt{3} + i\sqrt{13}}{-2i + \sqrt{3} + i\sqrt{13}}\right)
\end{aligned}
\end{equation}
\\
\\
Putting the LHS and RHS together. Starting from the identity
\begin{align*}
&\frac{\pi}{12}\ln\!\left(\frac{4}{3}\right)
+\cosh^{-1}\!\left(\frac{19}{6}\right)\tan^{-1}\!\left(\frac{\sqrt{13}-2}{\sqrt{27}}\right)
+2\,\Im\,\Li_2\!\left(\frac{5-\sqrt{13}}{\sqrt{12}}\,e^{i\pi/6}\right)
-\Cl_2\!\left(\frac{\pi}{3}\right) \\[4pt]
&\quad=\Im\,\Li_2\!\left(\frac{\sqrt{13}-5}{\sqrt{12}}\,e^{i\pi/6}\right)
-\Im\,\Li_2\!\left(\frac{5-\sqrt{13}}{\sqrt{12}}\,e^{i\pi/6}\right)
+\frac{\pi}{6}\ln 2 \\[4pt]
&\qquad-\frac12\tan^{-1}\!\left(\sqrt{\frac{3}{13}}\right)
\ln\!\left(\frac{19+5\sqrt{13}}{6}\right)
+\frac56\,\Cl_2\!\left(\frac{\pi}{3}\right)
-\frac{\pi}{12}\ln 3,
\end{align*}
one uses $\frac{\sqrt{13}-5}{\sqrt{12}}e^{i\pi/6}=-z$ and, writing $A=\Im\,\Li_2(z)$, $B=\Im\,\Li_2(-z)$, collects all dilogarithm terms on one side.  Moreover, using $\cosh^{-1}x=\ln\!\left(x+\sqrt{x^2-1}\right)$ one finds
\[
\cosh^{-1}\!\frac{19}{6}=\ln\!\left(\frac{19+5\sqrt{13}}{6}\right),
\]
so both arctangent terms multiply the same logarithm, and they combine via the elementary sub-identity
\[
\tan^{-1}\!\left(\frac{\sqrt{13}-2}{\sqrt{27}}\right)+\frac12\tan^{-1}\!\sqrt{\frac{3}{13}}=\frac{\pi}{6}.
\]
All logarithms cancel exactly, leaving the closed form
\begin{equation}
\label{eq:dilog-duplication}
3\,\Im\,\Li_2(z)-\Im\,\Li_2(-z)=\frac{11}{6}\,\Cl_2\!\left(\frac{\pi}{3}\right)-\frac{\pi}{6}\,\cosh^{-1}\!\left(\frac{19}{6}\right),
\qquad z=\frac{5-\sqrt{13}}{\sqrt{12}}\,e^{i\pi/6}.
\end{equation}

\noindent To obtain an infinite series without any dilogarithms, we can use $\text{Li}_2(z)+\text{Li}_2(-z)=1/2\text{Li}_2(z^{2})$ to compress the two dilogarithms into a single one. This speeds up the convergence by squaring, because by taking the imaginary part of  $e^{\frac{n\pi i}{3}}$ for positive integers $n$, every third term is ignored. Evaluating \eqref{eq:u-isqrt3} and combining \eqref{eq:dilog-duplication} we have:
\begin{equation}
\begin{aligned}
 \pi \ln\left(\frac{4}{3}\right) + 3\ln\left(\frac{1}{6}(19 + 5\sqrt{13})\right)\left( \tan^{-1}\left(\frac{3\sqrt{3}}{16 + 5\sqrt{13}}\right)\right)
+ \\
3\cdot\Im\left[\text{Li}_2\left(\frac{1}{12}(5 - \sqrt{13})^2 e^{\frac{\pi i}{3}}\right)\right] - \rm{Cl}_2\big(\frac{\pi}3\big)
=\frac{2}{\sqrt{3}}\sum_{n=0}^{\infty}  \left(  \frac{-1}{324}  \right)^{n} \frac{ (1/2)_n(1/2)_n(1)_n}{(3/2)_n(7/6)_n(5/6)_n}
\end{aligned}
\end{equation}
And:
\begin{equation}
\begin{aligned}
\Im\left[\text{Li}_2\left(\frac{1}{12}(5 - \sqrt{13})^2 e^{\frac{\pi i}{3}}\right)\right]=\sqrt{3}\sum_{n=1}^{\infty} \frac{(19 - 5 \sqrt{13})^{3n - 2} }{(-216)^n} \left( \frac{5 \sqrt{13} - 19}{(1 - 3n)^2} - \frac{6}{(2 - 3n)^2} \right)
\end{aligned}
\end{equation}

\subsubsection{Infinite series for  $\pi^2$ }

\noindent As an addendum, for example, the connection between $\sqrt{3}$ and $\sqrt{13}$ arises again:
\begin{equation}
\ln^2\left(\frac{5 + \sqrt{13}}{2\sqrt{3}}\right) - \frac{\pi^2}{12}=\frac{1}{180}\, _4F_3\left({1,1,1,\frac{3}{2}\atop 2,\frac{11}{6},\frac{7}{6}};\frac{-1}{324}\right)
\end{equation}

\noindent Which follows from derived  with a modified version of the integral procedure in section 2. (For $h \geq \alpha >0 $ where $\frac{4}{27} \cdot \frac{1}{\alpha^2 (1 + \alpha^2)} = 1$):
\begin{multline}
\frac{4}{15h^2(1+h^2)^2}\, _4F_3\left({1,1,1,\frac{3}{2}\atop 2,\frac{11}{6},\frac{7}{6}};\frac{-4}{27h^2(1+h^2)^2}\right)=
\\
\frac{-3}{4} \left(\pi - 2\arctan(h)\right)^2 
+ \ln^2\left(\frac{h^2 + 2 + \sqrt{3h^2 + 4}}{h\sqrt{1 + h^2}}\right)
\end{multline}

\subsection{Proving Bytsko's 2-term identities}

In the context of the Nahm conjecture, Zagier \cite{Zagier2007} numerically showed $\xi_Q = [\alpha] + [\alpha\beta^3] $ is a torsion element, where $\alpha, \beta \in (0,1)$ are solutions of the system:
\begin{align*}
1 - \alpha &= \alpha^3 \beta, \\
1 - \alpha\beta^3 &= \alpha \beta^2.
\end{align*}  This is equivalent to solving $x^6 + x^5 - 2 x^3 + 2 x - 1 =0$. Here, $y=1/(1-x_1)$, where $x_1$ is the negative real root and $x=x_0$ is the positive real root. Bytsko 
 \cite{Bytsko1999} conjectured the related 
 associated two-term dilogarithm identity $\operatorname {L}(x)+\operatorname{L}(y)=\frac{4\pi^2}{21}$ and asked if it's related to the trio of Watson $\pi/7$ identities. We answer in the affirmative.

\begin{proof} Applying \eqref{rogers} multiple times to \Cref{firstformatladder}:

\begin{lemma}

\begin{multline*}
 -3\operatorname{L}\left( \frac{-u - \sqrt{4 - 3u^2} }{2} \right) 
 - 3 \operatorname{L}\left(\frac{-u + \sqrt{4 - 3u^2} }{2}  \right) 
 -  \operatorname{L}\left( \frac{1 - u}{u + 1} \right)  +  \operatorname{L}\left( \frac{u-1}{u + 1} \right) \\ 
 + \operatorname{L}\left( \frac{u^2 +u- \sqrt{4 - 3u^2} + 2}{2u} \right) 
+ \operatorname{L}\left( \frac{u^2+u + \sqrt{4 - 3u^2} + 2}{2u} \right) 
  -3 \operatorname{L}(u)\in \pi^2 \mathbb{Q}  , 
\end{multline*}
\end{lemma}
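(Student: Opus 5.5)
Writing $\alpha_\pm=\frac{-u\pm\sqrt{4-3u^2}}{2}$, the strategy is to show that the six-term combination of \Cref{firstformatladder} and the seven-term combination of the Lemma differ by a sum of five-term relations \eqref{rogers}, each taken with arguments in $\mathbb{Q}(u,\sqrt{4-3u^2})$. The numbers $\alpha_\pm$ are the roots of $t^2+ut+u^2-1=0$, which is precisely the quadratic factor $u^2+ux+x^2-1$ in \eqref{productforpartial}. This gives the algebraic relations I will use throughout:
\[
\alpha_+\alpha_-=u^2-1,\qquad \alpha_++\alpha_-=-u,\qquad (1-\alpha_+)(1-\alpha_-)=u^2+u,\qquad (1+\alpha_+)(1+\alpha_-)=u^2-u .
\]
The first step is to rewrite the arguments of \Cref{firstformatladder} in terms of $\alpha_\pm$. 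For example,
\[
\frac{u^2+\sqrt{4-3u^2}-2}{u(u+1)}=\frac{u^2-1+(\alpha_+-\alpha_-)/2\cdot\ldots}{u(u+1)} .
\]
I expect each such argument to simplify to a cross-ratio-type expression such as $\frac{\alpha_\pm(1-\alpha_\mp)}{1-\alpha_+\alpha_-}$ or $\frac{-\alpha_\pm}{1-\alpha_\pm}$, up to inversion. Likewise, the Lemma's arguments $\frac{u^2+u\mp\sqrt{4-3u^2}+2}{2u}$ should reduce to rational expressions like $\frac{1-\alpha_\pm}{u}$ or $1+\frac{\alpha_\mp}{u}$. This dictionary is the heart of the argument.

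With the dictionary in hand, I will apply \eqref{rogers} with $(x,y)=(\alpha_+,\alpha_-)$. This gives
\[
L(\alpha_+)+L(\alpha_-)=L(u^2-1)+L\!\left(\frac{\alpha_+(1-\alpha_-)}{2-u^2}\right)+L\!\left(\frac{\alpha_-(1-\alpha_+)}{2-u^2}\right),
\]
whose last two arguments should match (after Landen/inversion) the $\pm\sqrt{4-3u^2}$ pairs of \Cref{firstformatladder}. I will then apply a second instance of \eqref{rogers}, for instance with $(x,y)=\bigl(u,\frac{1-u}{1+u}\bigr)$ or with $(x,y)=\bigl(-\alpha_\pm/u,\ \cdot\bigr)$, to convert the $L\bigl(\frac{1\mp u}{1+u}\bigr)$ and $L\bigl(\bigl(\frac{1-u}{1+u}\bigr)^2\bigr)$ terms into $L(u)$, $L\bigl(\frac{u-1}{u+1}\bigr)$ and the Lemma's $\frac{\cdots}{2u}$ terms. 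Throughout, I will use the duplication formula \eqref{duplicationformula}, Euler reflection \eqref{Eulerreflectionidentity} and Landen \eqref{Landenformula} in their $L$-function forms. These relate $L(x)$ to $L(1-x)$, to $L(1/x)$ and to $L\bigl(\frac{x}{x-1}\bigr)$ modulo $\pi^2\mathbb{Q}$ and elementary logarithm terms.

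The bookkeeping step is to collect coefficients and check that the $\pm$ symmetry, namely $\sqrt{4-3u^2}\mapsto-\sqrt{4-3u^2}$ swapping $\alpha_+$ and $\alpha_-$, is preserved. Each Rogers application is symmetric in $\alpha_\pm$, so it suffices to track one member of each conjugate pair.

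\textbf{Main obstacle.} I expect the hardest part to be choosing the right second and third five-term applications, and verifying that the leftover logarithmic products cancel. Since $L$ only satisfies the inversion and reflection relations up to $\ln$ terms, the $\frac{1}{2}\ln x\ln(1-x)$ corrections must sum to zero. I would handle this as in the proof of \Cref{previouslymaster}: exploit product identities such as $\alpha_+\alpha_-=u^2-1$ and $\frac{1-\alpha_+}{u}\cdot\frac{1-\alpha_-}{u+1}=1$, so that the logarithms pair as $\ln a+\ln b=0$. As a safeguard, I would first differentiate the Lemma's combination in $u$, using $dL(x)=\frac{1}{2}\bigl(\frac{\ln(1-x)}{x}\cdot\ldots\bigr)$, i.e.\ check that the symbol/$\wedge$-sum $\sum c_j\, x_j\wedge(1-x_j)$ vanishes in $\bigwedge^2$ of the multiplicative group. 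That confirms the exact choice of coefficients before committing to the explicit chain of Rogers relations. The constant in $\pi^2\mathbb{Q}$ is then fixed by evaluating at $u=1$, where $\alpha_\pm\in\{0,-1\}$.
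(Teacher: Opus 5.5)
Your route (reduce the Lemma to \Cref{firstformatladder} by instances of \eqref{rogers}) is also the paper's route; its proof is only that one sentence. The proposal, however, leaves the actual content unspecified, namely which five-term instances to use and why the logarithmic corrections cancel, and the concrete predictions it does make are wrong. The $\sqrt{4-3u^2}$-dependent arguments of \Cref{firstformatladder} are not cross-ratios. From $u^2+\sqrt{4-3u^2}-2=-(1-\alpha_+)(1+\alpha_-)$ and $u(u+1)=(1-\alpha_+)(1-\alpha_-)$, they are exactly $\pm\frac{1+\alpha_\mp}{1-\alpha_\mp}$, the images of $-\alpha_\mp$ under $x\mapsto\frac{1-x}{1+x}$. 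That points to the four-term relation \eqref{wellknown4term} with $x=-\alpha_\mp$ (and $x=u$ for the $\frac{1-u}{1+u}$ terms), not to \eqref{rogers} at $(\alpha_+,\alpha_-)$. Your first step produces three terms, and for each argument $x$ of them, $x(1-x)$ contains the new factor $1-\alpha_+\alpha_-=2-u^2$. Reflection, inversion and Landen preserve the factors of $x(1-x)$, so none of these terms can be identified with the Theorem~1 pairs; that step is a detour, not part of a dictionary.

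More seriously, the statement as printed cannot be reached by any chain of five-term relations, because it is false. Parametrize the conic $x^2+ux+u^2=1$ by
\[
u=\frac{t^2-1}{t^2+t+1},\qquad \alpha_+=\frac{2t+1}{t^2+t+1},\qquad \alpha_-=\frac{-t(t+2)}{t^2+t+1},\qquad \sqrt{4-3u^2}=\alpha_+-\alpha_-=\frac{t^2+4t+1}{t^2+t+1}.
\]
For every argument $x$ other than the last two, $x(1-x)$ factors over $\{2,t,t\pm1,t+2,2t+1,t^2+t+1\}$. But
\[
\frac{u^2+u+\sqrt{4-3u^2}+2}{2u}=\frac{5t^3+5t^2+5t+3}{2(t-1)(t^2+t+1)},\qquad 1-\frac{u^2+u+\sqrt{4-3u^2}+2}{2u}=-\frac{3t^3+5t^2+5t+5}{2(t-1)(t^2+t+1)}.
\]
So the symbol $\sum_j c_j\,x_j\wedge(1-x_j)$ contains a wedge of two irreducible cubics that no other term can cancel; the conjugate term brings the different cubics $3t^3+3t^2+t-1$ and $t^3-t^2-3t-3$. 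By a monodromy argument, the combination then has nonzero derivative in $u$, so it is not constant.

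The paper's next line ("Solving $\frac{u+2-2u^2\pm\sqrt{4-3u^2}}{2u}=y^{\pm1}$") shows the intended arguments are $\frac{-2u^2+u+2\pm\sqrt{4-3u^2}}{2u}=-\frac{(1+\alpha_\pm)\alpha_\mp}{u}$. In the parametrization these are $\frac{t(t+2)^2}{(t-1)(t^2+t+1)}$ and $\frac{2t+1}{(t+1)(t^2+t+1)}$, and with them the symbol of the Lemma does vanish, as does that of \Cref{firstformatladder}. Your ``safeguard'' therefore has to be the first step. It exposes the typo, and for the corrected statement it is itself a complete proof: zero symbol gives zero derivative, and then you fix the constant at one point. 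Before that, you need a real-valued convention for $\operatorname{L}$ at negative arguments, e.g.\ $\operatorname{L}(x)=-\operatorname{L}\bigl(\frac{x}{x-1}\bigr)$. This is needed because \eqref{RogersLdefinition} is not real there, and your base point $u=1$ gives $\alpha_-=-1$.
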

\noindent Solving \begin{align*}
\frac{u + 2 - 2u^2 + \sqrt{4 - 3u^2}}{2u} &= y, \\
\frac{u + 2 - 2u^2 - \sqrt{4 - 3u^2}}{2u} &= \frac{1}{y}.
\end{align*} 

\noindent We have $u=2\cos(3\pi/7)$. Using the Watson identity $2[u]+[u^2]=0$  , and applying \eqref{wellknown4term} and \eqref{Landenformula} , we have  $ -\left[ \frac{1 - u}{u + 1} \right]  +   \left[ \frac{u-1}{u + 1} \right] -3 [u]=0$, and $[y]+[1/y]=0$ 
\\
\\
\noindent Hence, we obtain  for $x_1,x_0$:
\begin{equation}
\pm\sqrt{1 - 3\sin^2\left(\frac{\pi}{14}\right)} - \sin\left(\frac{\pi}{14}\right)
\end{equation}
\end{proof}
\subsection{A two-term identity over $\mathbb{Q}(\sqrt{2})$}
 Bytsko \cite{Bytsko1999} also conjectured a two-term identity over $\mathbb{Q}(\sqrt{2})$ through asymptotic analysis. As 
 above, Bytsko noted that a proof is lacking. 

 \[
L\!\left(\frac{3}{2} - \frac{1}{2}\sqrt{2} - \frac{1}{2}\sqrt{2\sqrt{2}-1}\right)
+
L\!\left(\left(\frac{3}{2} + \sqrt{2}\right)\sqrt{2\sqrt{2}-1} - \frac{3}{2} - \frac{3}{2}\sqrt{2}\right)
=
\frac{\pi^2}{12}.
\]

\begin{proof}
    
 Consider the identity:
\begin{lemma}\label{previously10p2}
 \begin{multline}
 \operatorname{L}(k) - \operatorname{L}(k^2) 
 - \operatorname{L}\left(\frac{-2k^2 + k - \sqrt{-3k^2 + 2k + 1} + 1}{2 - 2k^2}\right) - \\ 
 \operatorname{L}\left(\frac{-2k^2 + k + \sqrt{-3k^2 + 2k + 1} + 1}{2 - 2k^2}\right)  \in \pi^2 \mathbb{Q}
\end{multline}
\end{lemma}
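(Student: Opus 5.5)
The plan is to obtain this identity directly from the three-term identity that follows Theorem~\ref{previouslymaster}, combined with a single application of Rogers' five-term relation \eqref{rogers}. The key observation is the factorization $-3k^2+2k+1=(1-k)(1+3k)$. With the substitution $u=1-k$ we get $u(4-3u)=(1-k)(1+3k)$, so the square root in the lemma is exactly the square root $\sqrt{u(4-3u)}$ of Theorem~\ref{previouslymaster}. Write $s=\sqrt{(1-k)(1+3k)}$.

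First, put $u=1-k$ into
\[
\operatorname{L}\left(\tfrac{2-u-\sqrt{4u-3u^2}}{2}\right)+\operatorname{L}\left(\tfrac{2-u+\sqrt{4u-3u^2}}{2}\right)+\operatorname{L}(u)\in\pi^2\mathbb{Q}.
\]
This gives $\operatorname{L}(b_+)+\operatorname{L}(b_-)+\operatorname{L}(1-k)\in\pi^2\mathbb{Q}$, where $b_\pm=\frac{1+k\pm s}{2}$. The Euler reflection \eqref{Eulerreflectionidentity}, in Rogers form $\operatorname{L}(x)+\operatorname{L}(1-x)=\pi^2/6$, turns this into
\[
\operatorname{L}(b_+)+\operatorname{L}(b_-)-\operatorname{L}(k)\in\pi^2\mathbb{Q}.
\]

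Second, apply \eqref{rogers} with $x=b_+$ and $y=b_-$. A direct computation gives
\[
b_++b_-=1+k,\qquad b_+b_-=\tfrac{(1+k)^2-(1-k)(1+3k)}{4}=k^2.
\]
Hence $xy=k^2$ and $1-xy=1-k^2$. Moreover $b_+(1-b_-)=b_+-k^2=\frac{1+k-2k^2+s}{2}$, and dividing by $1-k^2$ gives exactly $\frac{-2k^2+k+1+s}{2-2k^2}$. Symmetrically, $b_-(1-b_+)/(1-k^2)$ is the argument with $-s$. The five-term relation therefore reads
\[
\operatorname{L}(b_+)+\operatorname{L}(b_-)=\operatorname{L}(k^2)+\operatorname{L}(a_+)+\operatorname{L}(a_-),
\]
with $a_\pm$ the two arguments in the lemma. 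Substituting this into the previous display yields the claimed statement.

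The main obstacle is not algebraic but analytic: the validity range of the identities used. Rogers' five-term relation in the form \eqref{rogers} holds cleanly only for $0<x,y<1$ (real). The three-term identity inherits the constraint $|u|<1$, and the branch conventions, from Theorem~\ref{previouslymaster}. I would therefore first restrict to $k\in(0,1)$. There $s$ is real, $0<b_\pm<1$ (since $b_\pm$ are the roots of $t^2-(1+k)t+k^2$, which is positive at $t=0$ and $t=1$), and $0<u<1$, so every step is legitimate and the rational multiple of $\pi^2$ is constant. For other values of $k$, needed later for Bytsko's $\mathbb{Q}(\sqrt{2})$ identity, I would extend by analytic continuation. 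Alternatively, I would check that the arguments still lie in the region where $\operatorname{L}$ is real-analytic, allowing the constant in $\pi^2\mathbb{Q}$ to jump by the standard correction terms; a numerical check at one point per interval would fix it.
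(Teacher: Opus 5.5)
Your algebra is correct, and it follows the paper's route. The paper's proof is only the remark that the lemma follows from Theorem~\ref{previouslymaster} with $u=1-k$ and the elementary relations of \Cref{sec:prelim}. Your reflection step, followed by one application of \eqref{rogers} with $x=b_+$, $y=b_-$ (using $b_++b_-=1+k$ and $b_+b_-=k^2$), is exactly such a derivation.

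\textbf{The gap is in your analytic paragraph.} You assert that every step is legitimate on $k\in(0,1)$, but this is false.

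\begin{itemize}
\item The quadratic $t^2-(1+k)t+k^2$ equals $k^2-k<0$ at $t=1$. So on $(0,1)$ you actually have $0<b_-<1<b_+$. The three-term identity you start from already has an argument greater than $1$ for every $u\in(0,1)$.
\item The lemma's own arguments are also out of range. For $0<k<1$, $a_+-1=(b_+-1)/(1-k^2)>0$ and $a_-=b_-(1-b_+)/(1-k^2)<0$. At $k=\tfrac12$, $a_+\approx 1.412$ and $a_-\approx -0.0787$.
\item For $-\tfrac13\le k<0$ the argument $k$ itself is negative, and outside $[-\tfrac13,1]$ the square root is imaginary.
\end{itemize}

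So there is no real interval on which all arguments lie in $(0,1)$, and the clean form of \eqref{rogers} for $0<x,y<1$ never applies. With the principal-branch definition \eqref{RogersLdefinition}, the left-hand side at $k=\tfrac12$ is not even real. Analytic continuation does not repair this: going around branch points changes the combination by terms like $2\pi i\ln(\cdot)$, not by rational multiples of $\pi^2$. A numerical check cannot certify membership in $\pi^2\mathbb{Q}$ either.

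\textbf{The fix.} Work with the real-valued Rogers function
\[
\operatorname{L}(x)=\Re\operatorname{Li}_2(x)+\tfrac12\ln|x|\ln|1-x| \qquad \text{on } \mathbb{R}\setminus\{0,1\}.
\]
Equivalently, $\operatorname{L}(x)=-\operatorname{L}\bigl(x/(x-1)\bigr)$ for $x<0$ and $\operatorname{L}(x)=\pi^2/3-\operatorname{L}(1/x)$ for $x>1$.

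Its derivative is $-\tfrac12\bigl(\tfrac{\ln|1-x|}{x}+\tfrac{\ln|x|}{1-x}\bigr)$ everywhere on $\mathbb{R}\setminus\{0,1\}$. Hence the three-term identity, reflection and the five-term relation each hold up to a term that is locally constant, as long as no argument passes through $0$, $1$ or $\infty$.

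For $k\in(0,1)$ no argument does so. Note that $a_\pm=1$ iff $b_\pm=1$, and $a_\pm=0$ iff $b_\pm=0$ or $b_\mp=1$, and none of these occur on $(0,1)$. So your chain shows the four-term combination is constant on $(0,1)$. Letting $k\to0^+$, where $a_+\to1^+$ and $a_-\to0^-$, identifies the constant as $-\pi^2/6$. This agrees with a direct evaluation at $k=\tfrac12$.

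Other real intervals, and complex $k$ (via the Bloch--Wigner function), need the same bookkeeping done separately.
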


\noindent This follows from \Cref{previouslymaster} by applying elementary  transformations found in \Cref{sec:prelim}, and setting $u = 1 - k$. Iterating with \eqref{rogers}, we obtain a 5-term relation, composed of: $$ z(k) = \frac{k^2 + (k + 1) \sqrt{-3k^2 + 2k + 1} - 
 2k - 1}{2(k^2 - k - 1)} $$ and $$ w(k) = \frac{k^2 - (k + 1) \sqrt{-3k^2 + 2k + 1} - 2k - 1}{2(k^2 - k - 1)}. $$ Now, we have to 
 show that for $\operatorname{Li}_2(k) - \operatorname{Li}_2(k^2) - \operatorname{Li}_2\left( \frac{k^3}{(k - 1)(k + 1)^2} \right)$ the 
 dilogarithm parts cancel, in which we let $k=\sqrt{2}$. Substituting into $z(k)$, we have $\frac{1}{2} \left( 3 - \sqrt{2} - \sqrt{2\sqrt{2} - 
 1} \right) $, which is a root of $x^4 - 6 x^3 + 13 x^2 - 10 x + 1=0$. This agrees with Bytsko's result.

\end{proof}

\subsection{8-degree 2-term identity}

We extend Bytsko's result for complex values. Roger's L function has a complex counterpart, called the Bloch-Wigner function, defined as 

\begin{equation}
\operatorname{D}_2 (z) = \operatorname{Im} (\operatorname{Li}_2 (z) )+\arg(1-z)\log|z| , \ \text{if}  \ z \in \mathbb{C} \setminus \{0, 1\}.
\end{equation}

\noindent In special circumstances, there exist non-trivial 2-term closed-form relations. Known examples include:
$\operatorname{Li}_2\left(\frac{1}{2} - \frac{i}{2}\right) = -i G + \frac{5\pi^2}{96} - \frac{\ln^2 2}{8} + \frac{i \pi \ln 2}{8}$, where G is Catalan's constant. 
\\
\\
\noindent For  $y^8 - 3 y^7 + y^6 + 4 y^5 - 6 y^3 + 3 y + 1=0$, where approximately  $r_1=-0.77367 - 0.62681 i$ and $r_2=1.32654 + 0.31844 i$ (checked to 50 digits), the below identity holds:
\begin{equation}
\operatorname{D}(r_1)+\operatorname{D}(r_2)=0
\end{equation}

\noindent Similar to 3.6, the proof follows from Theorem  2, but by a different matching condition. 

\begin{lemma}\label{previously10p2b}
 The equality of 
\begin{multline*}
 \operatorname{Li}_2 \left( \frac{2 u^2 - 3 u - \sqrt{u(4-3u)}}{2 (u-1)^2} \right) 
 + \operatorname{Li}_2 \left( \frac{(u - 1) \left(2 u^3 - 4 u^2 + 3 u - \sqrt{u(4 - 
 3 u)} \right)}{2 \left(u^4 - 3 u^3 + 4 u^2 - 2 u + 1\right)} \right) \\ 
 + \operatorname{Li}_2 \left( \frac{u \left( u^2 - u + (u - 1) \sqrt{u(4-3u)} + 
 2 \right)}{2 \left(u^4 - 3 u^3 + 4 u^2 - 2 u + 1\right)} \right) 
 + \operatorname{Li}_2 \left( \frac{u \left(2 u^2 - 3 u + \sqrt{u(4-3u)}\right)}{2 (u-1)^2} \right) 
\end{multline*}
 and the closed form 
\begin{multline*}
 \text{ { \footnotesize $ - \ln \left( \frac{u^2 - u + (u - 
 1) \sqrt{u(4 - 3 u)} + 2}{2 \left(u^4 - 3 u^3 + 4 u^2 - 2 u + 1\right)} \right) 
 \ln \left( \frac{(u - 1) \left(2 u^3 - 5 u^2 - u \sqrt{(4 - 3 u) u} + 4 u - 2\right)}{2 \left(u^4 - 3 u^3 + 
 4 u^2 - 2 u + 1\right)} \right) - $ } } \\ 
 \text{ { \footnotesize $ \frac{3}{2} \ln^2 \left( 1 - \frac{u - \sqrt{u(4-3u)}}{2(u-1)} \right) - 
 \frac{1}{2} \ln^2 (1 - u)$. } }
\end{multline*}
 holds for complex $u$. 
\end{lemma}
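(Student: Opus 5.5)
The plan is to obtain the lemma from \Cref{previouslymaster} together with one application of Rogers' five-term relation \eqref{rogers}. The two-variable relation will be written in $\operatorname{Li}_2$ form with its explicit logarithmic correction terms. Set
$$ w=\frac{2u^2-3u+\sqrt{u(4-3u)}}{2(u-1)^2},\qquad z=\frac{2u^2-3u-\sqrt{u(4-3u)}}{2(u-1)^2}. $$
By Vieta, $wz=\frac{u^2}{(u-1)^2}$ and $1-w$, $1-z$ factor nicely. The first and last arguments in the lemma are $z$ and $uw$. This suggests applying \eqref{rogers} with $x=u$ (or $x=u/(u-1)$ after Landen \eqref{Landenformula}) and $y=w$.

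First I would verify the algebraic matching. With $x=u$ and $y=w$, I would compute the three derived arguments
$$ xy=uw,\qquad \frac{x(1-y)}{1-xy},\qquad \frac{y(1-x)}{1-xy}, $$
and simplify them by rationalizing with $\sqrt{u(4-3u)}$. The denominator $1-uw$ should produce the quartic $u^4-3u^3+4u^2-2u+1$, up to the factor $(u-1)^2$. The goal is to identify the two middle arguments of the lemma. The term $\operatorname{L}(u)$ can then be eliminated against $\operatorname{Li}_2(u/(u-1))$ from \Cref{previouslymaster} via Landen. The term $\operatorname{L}(w)$ is eliminated by adding \Cref{previouslymaster}, which expresses $\operatorname{Li}_2(w)+\operatorname{Li}_2(z)$ in closed form. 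Together these convert the five-term relation into the stated four $\operatorname{Li}_2$ sum. If the naive pairing fails, I would instead try $y=z$ or $x=u/(u-1)$, whichever reproduces the factor $(u-1)$ in the second argument.

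Second, I would collect the logarithmic terms. These are the $\frac12\ln x\ln(1-x)$ pieces from converting each $\operatorname{L}$ to $\operatorname{Li}_2$, the $-\frac12\ln^2(1-u)$ from Landen, and the $\frac32\ln^2(\cdot)$ from \Cref{previouslymaster}. I would simplify them using factorizations such as $1-\frac{x(1-y)}{1-xy}=\frac{1-x}{1-xy}$, together with the $ab=1$ trick from the proof of \Cref{previouslymaster}. The aim is to reach exactly the single product of logs plus the two squared logs in the statement. Any constant multiple of $\pi^2$ should cancel, which I would confirm by evaluating at $u=0$, where every argument degenerates.

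The main obstacle is that the statement is claimed for complex $u$. Branch choices of $\log$, of the square root, and of $\operatorname{Li}_2$ across $[1,\infty)$ could introduce piecewise $2\pi i\log$ corrections. I would first prove the identity on a small real interval near $0$, say $0<u<\epsilon$, where all arguments are small and all principal branches agree. I would then extend it by analytic continuation: both sides are analytic in $u$ off the cut of $\sqrt{u(4-3u)}$ and the preimages of the dilogarithm cuts. I would then argue, or at least check numerically on a grid, that no argument crosses $[1,\infty)$ and no log argument crosses $(-\infty,0]$ on the domain intended. Failing a clean argument, I would state the identity as holding on the connected component containing $(0,\epsilon)$, since that is all that is needed for the specialization to the octic roots $r_1,r_2$.
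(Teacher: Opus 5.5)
Your proposal is correct and follows essentially the paper's route, which is Theorem~\ref{previouslymaster} plus Landen plus one application of \eqref{rogers}. The pairing $x=u$, $y=w$ works exactly: $\frac{x(1-y)}{1-xy}$ and $\frac{y(1-x)}{1-xy}$ are the lemma's third and second arguments, $\frac{1-u}{1-uw}$ and $\frac{1-w}{1-uw}$ are its two logarithm arguments, and no $\pi^2$ constant survives. One harmless slip: Vieta gives $wz=\frac{u}{u-1}$, not $\frac{u^2}{(u-1)^2}$, but you never use the product.
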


And setting:

\begin{equation*}
\left\{
\begin{aligned}
\frac{(u-1) \left(2 u^3 - 4 u^2 + 3 u - \sqrt{u(4 - 3 u)}\right)}{2 \left(u^4 - 3 u^3 + 4 u^2 - 2 u + 1\right)} = z\\
\frac{u \left(u^2 - u + (u-1) \sqrt{u(4-3u)} + 2\right)}{2 \left(u^4 - 3 u^3 + 4 u^2 - 2 u + 1\right)} = \frac{1}{z}
\end{aligned}
\right.
\end{equation*}

\noindent A resultant calculation yields the sought 8-degree polynomial. 

\section{Dilogarithm Ladders}
As in this work  of \cite{Lewin1958,AbouzahraLewinXiao1987,Lewin1986,Lewin1981,Lewin1991text,Lewin1993,Lewin1991collection,CohenLewinZagier1992,Gangl2013}, we consider so-called \emph{ladders}, defined as :

\begin{equation}
\begin{aligned}
\sum_{j=1}^N A_jL(x^j) \in \pi^2 \mathbb{Q} 
\end{aligned}
\end{equation}
\noindent Where $A_j \in\mathbb{Q}$ and for algebraic $x \in (0,1)$. The minimal polynomial for $x$ is sometimes denoted as the \textit{base equation}.  
\\
\\
\noindent To present an example, for quadratic fields, Browkin \cite{Brow} proposed the following ladders:

\begin{align*}
L(x^{6})-6L(x^{3})+L(x^{2})+18L(x)
    &= 8L(1),  \\
L(z^{6})-3L(z^{3})-6L(z^{2})+9L(z)
    &= 2L(1). 
\end{align*}

\noindent where:
\[
x=\frac{\sqrt{13}-1}{6},
\qquad
z=\frac{\sqrt{13}+1}{6},
\]

\noindent The base equations are thus $3 x^2 + x - 1=0, 3 z^2 - z - 1=0$.
\\
\\
\noindent In subsequent sections, we classify  known non-trivial ladder base families, derive an analytic proof of a  ladder relation by Loxton-Lewin using \Cref{previouslymaster}, and derive a pair quartic-base  ladders, which represents a new ladder family using \Cref{firstformatladder}. 

\remark Higher order ladders also exist (e.g. trilogarithm ladders) but this  work focuses on the dilogarithm.

\subsection{Radius lemma}

In the context of ladders, we also introduce what we call the Radius Lemma, which we will use for later proofs. The radicals appearing in Theorems~1 and~2 take the form $a_1(u)+a_2(u)\sqrt{d(u)}$, with no evident way to convert them into the powers $r,\; r^2,\; r^3,\;\ldots$ that underlie classical polylogarithmic ladders. Since these radicals occur as conjugate pairs, we exploit this symmetry through what we call the
\emph{radius method}, which identifies exceptional values of \(u\) for which
the radical expressions compress to a single parameter \(r\) suitable for
ladder constructions.
\\
\\

\noindent Consider the well-known identity (for real $r)$:

\begin{equation*}\label{eq:cuberoot}
    -\operatorname{Li}_2(r) + \frac{1}{3}\operatorname{Li}_2\!\left(r^3\right)
    = \operatorname{Li}_2\!\left(r\,e^{2\pi i/3}\right) + \operatorname{Li}_2\!\left(r\,e^{4\pi i/3}\right).
\end{equation*}

\noindent This becomes more useful in the context of ladders once a conjugate pair of algebraic functions can be tuned to sit exactly at the cube roots of unity, as follows.

\begin{lemma}[Radius lemma]\label{lem:radius}
Let
\begin{equation*}
    f_j(u) = \frac{a(u) \pm b(u)\sqrt{k(u)}}{h(u)}
\end{equation*}
denote a conjugate pair of functions, with $a, b, h, k$ polynomials (or rational functions) in $u$. Impose the constraint
\begin{equation}\label{eq:constraint}
    \frac{b(u)\sqrt{k(u)}}{a(u)} = \pm i\sqrt{3}.
\end{equation}
Then:
\begin{enumerate}
    \item[(i)] Constraint \eqref{eq:constraint} forces an auxiliary function $c(u)$ to vanish identically:
    \begin{equation*}
        c(u) = 0;
    \end{equation*}
    \item[(ii)] under \eqref{eq:constraint}, the norm of the conjugate pair reduces to
    \begin{equation*}
        r \;=\; \frac{\sqrt{a(u)^2 - b(u)^2 k(u)}}{h(u)} \;=\; \frac{2a(u)}{h(u)}.
    \end{equation*}
\end{enumerate}
Eliminating $u$ between $c(u) = 0$ and $r = 2a(u)/h(u)$ via the resultant
\begin{equation*}
    p(r) = \operatorname{Res}_u\!\left(c(u),\; r\,h(u) - 2a(u)\right)
\end{equation*}
then yields a polynomial relation $p(r) = 0$ satisfied by the radius $r$.
\end{lemma}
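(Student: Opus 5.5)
The plan is to reduce the whole lemma to elementary algebra on the two conjugates $f_\pm = (a \pm b\sqrt{k})/h$. Write $\beta = b\sqrt{k}$. The constraint \eqref{eq:constraint} says $\beta = \pm i\sqrt{3}\,a$, so
\[
f_\pm = \frac{a}{h}\left(1 \pm i\sqrt{3}\right) = \frac{2a}{h}\,e^{\pm i\pi/3}.
\]
The choice of sign only swaps the two conjugates. Consequently the conjugate pair lies on a common circle of radius $2a/h$ and at arguments $\pm\pi/3$. After replacing $u$ by the sign change implicit in the paper's conventions, these are the directions in which the cube-root-of-unity identity applies. This is exactly the geometric content the lemma is meant to capture, so it will be stated first.

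For part (i), the auxiliary function $c(u)$ will be defined explicitly by squaring the constraint. From $\beta^2 = -3a^2$ we get $b(u)^2k(u) + 3a(u)^2 = 0$. I therefore take
\[
c(u) := b(u)^2k(u) + 3a(u)^2,
\]
cleared of denominators if $a,b,k$ are rational, so that $c$ is a polynomial. The constraint implies $c(u)=0$ directly. Conversely, $c(u)=0$ implies the constraint with one of the two signs, which the $\pm$ in \eqref{eq:constraint} absorbs. The only care needed is the branch of $\sqrt{k}$ under the paper's principal-branch convention, but both signs are allowed, so no branch issue arises.

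For part (ii), I compute the norm $f_+f_- = (a^2 - b^2k)/h^2$. Substituting $b^2k = -3a^2$ from $c(u)=0$ gives $f_+f_- = 4a^2/h^2$. Taking square roots yields $r = \sqrt{a^2-b^2k}/h = 2a/h$, up to the sign choice of the square root. I will fix that sign by declaring $r := 2a/h$, consistent with the polar form above, in which $|f_\pm| = |2a/h|$.

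Finally, to eliminate $u$, note that any $u_0$ satisfying the constraint is a common root of $c(u)$ and $r h(u) - 2a(u)$ with $r = 2a(u_0)/h(u_0)$. By the standard property of resultants, the resultant vanishes whenever the two polynomials share a root, provided the leading coefficients do not both vanish. Hence $p(r) = \operatorname{Res}_u(c, rh - 2a)$ vanishes at that $r$.

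The only genuine obstacle is nondegeneracy: $p$ could be identically zero if $c$ and $rh-2a$ share a factor independent of $r$, for instance a common root of $a$ and $h$. I would handle this by first cancelling any common factors of $a$ and $h$, and by requiring $h(u_0)\neq0$ and $a(u_0)\ne 0$. The latter condition is needed anyway so that the constraint is meaningful. Under these conditions $p$ is a nonzero polynomial relation for $r$.
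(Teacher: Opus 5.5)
Your proposal is correct and amounts to the verification the paper leaves implicit. The paper states the Radius lemma without proof, and your $c(u)=b(u)^2k(u)+3a(u)^2$ together with $f_+f_-=(a^2-b^2k)/h^2=4a^2/h^2$ is exactly what it uses later: $c\propto u\,(u^3-3u^2+2u+\tfrac13)$ with $r^2=u/(u-1)$ in the $\pi/9$ proof, and $c\propto u^4-5u^2+\tfrac{16}{3}$ with $r^2=(u-1)/(u+1)$ in the $\mathbb{Q}(\sqrt{33})$ derivation.

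Two small slips, neither affecting the lemma:
\begin{itemize}
\item Placing $f_\pm$ at arguments $\pm\pi/3$ requires $2a/h>0$. If $2a/h<0$ they sit at $\pm 2\pi/3$, which is the genuine cube-root-of-unity case, whereas $\pm\pi/3$ matches the paper's variant with $-r$.
\item For $p\not\equiv 0$ you also need $c\not\equiv 0$, in addition to cancelling common factors of $a$ and $h$.
\end{itemize}
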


\subsection{Unique ladder families}

 Gordon and McIntosh \cite{GordonMcIntosh1997} remarked that producing ladders that are not of a ``trivial" base ; e.g.  $u^a+u^b\pm u^c=1$,  $u^a+u^b=1$ is ``very difficult".  Don Zagier\cite{Zagier1991} also remarked that finding examples of ladders is ``very difficult and requires great ingenuity to produce examples.
 \\
 \\
 Some recent papers (e.g. Campbell \cite{Campbell2025}, Alha \cite{Alha2025}) purport new ladders, but these actually reduce to known relations. For example Campbell found a ladder with a base equation $x^{10}+x^9+x^8+x^7+x^6+x^5-x^4-x^3-x^2-x-1=0$, but this is the same as $x^{11}-2x^5+1=0$, which follows from setting $a,b=5, c=11$ above. The confusion arises because it's possible to express a ladder-generating base polynomial in may ways through different derivations while still getting the same solutions, and these may be interpreted to be unique ladders, when they are not. 
 \\
 \\
 A good summary of results up to 2022 is given by Campbell \cite{Campbell2025}. \noindent Below are the known table of unique ladder families (that are not of the form $u^a+u^b\pm u^c=1$,  $u^a+u^b=1$):

\begin{center}
\begin{tabular}{c l l l l}
\toprule
\# & Author(s) & Degree & Field/Angle & \# of real roots\\
\midrule
1 & Browkin, Lewin & 2 &  $\mathbb{Q}(\sqrt{13})$,  $\mathbb{Q}(\sqrt{15})$,  $\mathbb{Q}(\sqrt{21})$ &2 \\
2 & Loxton, Watson, etc. & 3 & $\pi/7$, $\pi/9$,$\pi/18$ & 3 \\
3 & Author & 4 & $\mathbb{Q}(\sqrt{33})$,  $\mathbb{Q}(\sqrt{5})$,  $\mathbb{Q}(\sqrt{10})$ &4\\
4 & Gordon--McIntosh & 4 & $\mathbb{Q}(\sqrt{5})$ &2\\
5 & Lewin, Bailey etc.  & misc. & Salem numbers & 2\\
6 &  Rogers and Kummer  & misc. & misc. & 2\\
7 &  Lewin  & misc. & misc. & 2\\
8 & Ramanujan, Bailey et al.  & 1 & 1/2,1/3 & 1\\
9 & Campbell  & misc. & misc. & 1\\
\bottomrule
\end{tabular}
\end{center}

\remark {\#5 are rich in ladder relations. A famous example is a seventeenth-order polylogarithm ladder given by David H. Bailey and David J. Broadhurst. }

\remark{\#3 The $\mathbb{Q}(\sqrt{33})$ case is derived analytically in section 4. The $\mathbb{Q}(\sqrt{5})$ and $\mathbb{Q}(\sqrt{10})$} cases were found numerically, in section 5.

\remark{\#6 is the 15-term ladder relation with a base equation that satisfies $F(u) = u^s + 1 - \sum_{n=1}^{5}\left(u^{q_n} + u^{s-q_n}\right) = 0$ where $s = \frac{1}{2}\sum_{n=1}^{5}q_n$}.

\remark{\#7 has the base equation given by  $u^p+u^n-u^q+u^{n+m}-u^{q+p+m}=1$ which is derived from a 9-term relation, Enforcing $q>n$ gives a real root in $(0,1)$. Upon abstracting the factor $(1-u)$, has 2 real roots.}

\remark {\#8 are linear ladder relations of bases 1/2 and 1/3.} 

\remark {Other quadratic field base equations exist, those being in fields $\mathbb{Q}(\sqrt{N})$, $N=2,5,3,6$, but they follow from one of the trivial relations or from \#6.} 

\remark {\#9, in private communication for a forthcoming paper, reproduced with his permission, John M. Campbell found the following base equation and ladder. Let $r \in (0,1)$ be the real root of
$x^9 + x^6 - x^4 + x^2 - 1 = 0$.

\subsection{Cubic ladders involving $\pi/7$, $\pi/9$, and $\pi/18$}  

For ladders in which \(x\) is a root of a minimal cubic polynomial, there are two classical families associated with the trigonometric values \(\pi/7\) and \(\pi/9\). Watson's ladder identities are as follows:
\[
L(\alpha)-L(\alpha^2)=\frac17L(1),
\]
\[
2L(\beta)+L(\beta^2)=\frac{10}{7}L(1),
\]
\[
2L(\gamma)+L(\gamma^2)=\frac{8}{7}L(1).
\]

\noindent Where  $\alpha$, $-\beta$, and $-1/\gamma$ are  the roots of
\[
x^3+2x^2-x-1=0,
\]
and 
\[
\alpha=\frac12\sec\frac{2\pi}{7},
\qquad
\beta=\frac12\sec\frac{\pi}{7},
\qquad
\gamma=2\cos\frac{3\pi}{7},
\]

\noindent These are strait-forward to prove by applying a few applications of \eqref{rogers} and  \eqref{duplicationformula}. A proof can be found in  Kirillov \cite{Kirillov1995} .

\subsubsection{Analytic proof of $\pi/9$ Loxton-Lewin ladder}. 

\noindent For ladders involving trigonometric values of $\pi/9$, proofs are more difficult. J. H. Loxton conjectured the below trio $\pi/9$ ladder identities in the late `70s or early `80s. He \cite{Loxton} proved the first two by applying asymptotic analysis to Rogers-Ramanujan type partition functions of Slater, but it failed for \eqref{eq:loxton-mu}, which resisted a proof for a while.

\begin{align}
\operatorname{Li}_2(\kappa)
+\operatorname{Li}_2(\kappa^2)
-\tfrac{1}{3}\operatorname{Li}_2(\kappa^3)
  &= \frac{7\pi^2}{54} - \log^2\kappa,
  &\quad \kappa
  &= \frac{1}{2}\sec\!\left(\frac{\pi}{9}\right), \label{eq:loxton-kappa}\\[6pt]
\operatorname{Li}_2(-\lambda)
+\operatorname{Li}_2(\lambda^2)
-\tfrac{1}{3}\operatorname{Li}_2(-\lambda^3)
  &= \frac{\pi^2}{54} - \log^2\lambda,
  &\quad \lambda
  &= \frac{1}{2}\sec\!\left(\frac{2\pi}{9}\right), \label{eq:loxton-lambda}\\[6pt]
\operatorname{Li}_2(-\mu)
+\operatorname{Li}_2(\mu^2)
-\tfrac{1}{3}\operatorname{Li}_2(-\mu^3)
  &= -\frac{\pi^2}{54},
  &\quad \mu
  &=2\cos\!\left(\frac{4\pi}{9}\right). \label{eq:loxton-mu}
\end{align}

\noindent Loxton asked if a direct proof was possible instead of having to apply asymptotic analysis. G. Szekeres brought Loxton's results to Lewin's attention, but Lewin \cite{Lewin1982} was likewise unable to prove \eqref{eq:loxton-mu} using \eqref{rogers} or other methods.
\\
\\
The situation finally changed in '89 when Gangl (unpublished) proved \eqref{eq:loxton-mu} using bloch groups. Kirillov \cite{Kirillov1995} '95 (pg. 18) published an algebraic proof of \eqref{eq:loxton-mu} that requires 14 applications of \eqref{rogers}. The initial values $x,y$ are chosen in such a way that the sought ladder relation is obtained with row reduction. Gangl's proof, though unpublished, likely follows a similar Bloch-group argument as in  Kirillov \Cref{sec:prelim},though this not not been confirmed.  
\\
\\
Our analytic proof proves \eqref{eq:loxton-mu} with a single application of \eqref{rogers}, and shows how the cancellation of the log terms occurs.

\begin{proof}
We observe \eqref{eq:loxton-mu} resembles:
\begin{equation}
 \begin{aligned}
 - \operatorname{Li}_2(-r) + \frac{1}{3} \operatorname{Li}_2(-r^3) = \operatorname{Li}_2\left(r e^{\pi i / 3}\right)+\operatorname{Li}_2\left(r e^{-\pi i / 3}\right).
\end{aligned}
\label{eq:loxton-resemblance}
\end{equation}

\noindent We need to relate it to a second identity for the $\mu^2$ term. Using Theorem~\ref{previouslymaster} and Lemma~\ref{lem:radius}, we have $r=\sqrt{\frac{u}{u - 1}} = 2\cos\left(\frac{4\pi}{9}\right)$. Substitute $u=\frac{r^2}{-1+r^2}$ into $u^3 - 3u^2 + 2u + \frac{1}{3}=0$. Hence,
\begin{equation}
r^6 - 6 r^4 + 9 r^2 - 1 = (r^3 - 3 r - 1)(r^3 - 3 r + 1) = 0.
\label{eq:min-poly}
\end{equation}
This factors into the minimal polynomial of $r=2\cos\left(\frac{4\pi}{9}\right)$ as the only applicable root. Putting it together:
\\
\begin{equation*}
 \frac{3}{2} \ln^2 \left( 1 - \frac{u - \sqrt{(4 - 3 u) u}}{2 (u-1)} \right) 
 - \operatorname{Li_2} (r^2) = \operatorname{Li}_2 (-r) - \frac{1}{3} \operatorname{Li}_2 (-r^3). 
\end{equation*}
\\
Regarding the quantity contained in $\ln^2$, via  $\sqrt{u (4-3u)} = i\sqrt{3} (2 u^2 - 3 u)$, we split it into complex and real parts:

\begin{equation}r_oe^{i\theta} =\frac{u - 2}{2 (u - 1)}   +i\frac{\sqrt{3} (2 u^2 - 3 u)}{2 (u - 1)}\end{equation}
\\
\noindent Via lemma~\ref{lem:radius} again $\frac{ 2 u^2 - 3 u}{ (u - 1)^2}=  \sqrt{\frac{u}{u - 1}}$. Taking the norm of (26): $\sqrt{\frac{3 u^4 - 9 u^3 + 10 u^2 - 7 u + 4}{4 (u - 1)^2)}}=1$, 
\noindent by substituting $u^4=3u^3-2u^2-u/3$, which we determined earlier, so $r_0=1$.  
\\
\\
\noindent For the quantity $\frac{u - 2}{2 (u - 1)}=x$, by solving for $u = \frac{2(x - 1)}{2 x - 1}$ and plugging back into $u^3 - 3u^2 + 2u + \frac{1}{3}=0$, we get $-8 x^3 + 6 x + 1=0$, which is the minimal polynomial for $\cos(\pi/9)$. The unit circle condition implies that the imaginary  part is  $\sin(\pi/9)$. So we have $ \frac{3}{2} \ln^2(e^{i\frac{\pi}{9}})= -\pi^2/54 $. \end{proof}
\noindent 

\subsubsection{$\pi/18$ ladders}

\noindent Gordon, B. and McIntosh, R. J. \cite{GordonMcIntosh1997} found via a computer search a family of $\pi/18$ ladders, analogous to the trio of $\pi/9$ identities:
\begin{align}
\begin{aligned}
2L(a^3) - 2L(a^2) - 11L(a) + 3L(1) &= 0, \\
2L(b^6) - 4L(b^3) - 15L(b^2) + 22L(b) - 6L(1) &= 0, \\
2L(c^6) - 4L(c^3) - 15L(c^2) + 22L(c) - 4L(1) &= 0,
\end{aligned}
\qquad
\begin{aligned}
a &= 2\sqrt{3}\cos\!\left(\tfrac{5\pi}{18}\right) - 2, \\
b &= 2\sqrt{3}\cos\!\left(\tfrac{11\pi}{18}\right) + 2, \\
c &= 2\sqrt{3}\cos\!\left(\tfrac{7\pi}{18}\right) - 1.
\end{aligned}
\end{align} 

\remark No proof is known, and a derivation with Lemma 1 is not possible either. An algebraic proof would presumably require  more terms than the 14 required for the third Lewin-Loxton identity. It  exists likely  due to  the high density of Bloch group elements in $\mathbb{Q}(\zeta_9)^+$.

\subsection{Quartic ladders}

\subsubsection{Gordon and McIntosh quartic ladder}

As indicated by \#6 in the aforementioned table, Gordon and McIntosh (1997) also found the below pair of  quartic ladder relations. 
\\
\\
\noindent The algebraic number $\delta$ is defined by
\begin{equation*}
    \delta = \frac{1}{2}\left(\sqrt{3+2\sqrt{5}} - 1\right).
    \label{eq:delta-def}
\end{equation*}

\noindent This number satisfies the identity
\begin{equation*}
    5L(\delta^3) - 5L(\delta) + L(1) = 0,
    \label{eq:l-alg-17}
\end{equation*}

\noindent as well as the higher-order relation
\begin{equation*}
    L(\delta^{12}) - 2L(\delta^6) - 6L(\delta^4) + 4L(\delta^3) + 3L(\delta^2) + 4L(\delta) - 4L(1) = 0.
    \label{eq:l-alg-18-19}
\end{equation*}

\noindent A nice, simple proof of the first is found in \cite{Bytsko1999}. The second identity was found experimentally using Zaiger's wedge method,  discussed in Gordon and McIntosh's paper \cite{GordonMcIntosh1997} . 

\subsubsection{Derivation quartic $\mathbb{Q}(\sqrt{33})$ ladder family}

 \noindent In this section, we derive a pair of 5-term ladders of a quartic base in the same spirit as the $\pi/9$ ladders. Ours represents a new base equation and family of ladders. Of the literature consulted, ours is the only base equation that has four real roots. This means it cannot be derived from known ladder-generating families. It can be thought of as analogous to the $\pi/9$ trio,  as it's derived in a similar manner. Similar to the the  $\pi/7$,  $\pi/9$ or  $\pi/18$ families, it comes as a pair. 
 
\begin{proof}

We make use of \Cref{firstformatladder} and  Lemma~\ref{lem:radius}. Set: 
\begin{equation*}
\frac{\sqrt{4-3u^2}}{ u^2 - 2} = -i \sqrt{3}
\end{equation*}
\\
We have $r = \sqrt{\frac{u - 1}{u + 1}} \quad \text{and:} \quad r^2 = \frac{(u - 1)^2}{u^2 - 1} = \frac{u - 1}{u + 1} $. 
 Explicitly, we have that 
\begin{equation}\label{radiusfornewladder1}
 r=\frac{1}{4} \left( -1 + \sqrt{33} - \sqrt{2 \left(9 - \sqrt{33} \right)} \right) 
\end{equation}
 Consequently, we have that 
\begin{equation*}
\begin{aligned}
\operatorname{Li}_2\left( \frac{u^2 - \sqrt{4 - 3 u^2} - 2}{-(u + 1) u} \right)
+ \operatorname{Li}_2\left( \frac{u^2 + \sqrt{4 - 3 u^2} - 2}{-(u + 1) u} \right)
= -\operatorname{Li}_2(r) + \frac{1}{3} \operatorname{Li}_2(r^3)\\
\operatorname{Li}_2\left( \frac{u^2 - \sqrt{4 - 3 u^2} - 2}{(u + 1) u} \right)
+ \operatorname{Li}_2\left( \frac{u^2 + \sqrt{4 - 3 u^2} - 2}{(u + 1) u} \right)
= -\operatorname{Li}_2(-r) + \frac{1}{3} \operatorname{Li}_2(-r^3)
\end{aligned}
\end{equation*}
 The resultant ladder relation is such that 
$$ 3 \operatorname{Li}_{2}(r^6) + 3 \operatorname{Li}_{2}(r^4) 
 - 8 \operatorname{Li}_{2}(r^3) - 33 \operatorname{Li}_{2}(r^2) + 24 \operatorname{Li}_{2}(r) - \zeta(2) = 
 6 \log^2(r). $$ 
 Via $u^4 - 5 u^2 + \frac{16}{3} = 0$, where $u = \frac{1+r^2}{1 - r^2}$, we can determine that 
 all possible radii $r$ for ladders derived in the above manner are positive solutions of $(r^4 - r^3 - 6 r^2 - r + 
 1) (r^4 + r^3 - 6 r^2 + r + 1) = 0$, so there are four possible ladders, corresponding to $r_1$, $r_2$, $r_3$, and $r_4$. 
 These ladders may be thought of as being analogues of the trio of ladders related to the value $\frac{\pi}{9}$. Two of these are 
 redundant, given that $r_1=r_3^{-1}$ and $r_2=r_4^{-1}$. 
\\
\\
 Setting $\frac{\sqrt{4 - 3u^2}}{2 - u^2} = \sqrt{3} \, i$, gives the second radius, with 
\begin{equation}\label{radiusfornewladder2}
 r_2 = \frac{1}{4} \left(1 + \sqrt{33} -  \sqrt{2 \left(9 + \sqrt{33} \right)} \right)
\end{equation}
 The associated ladder relation is such that 
 $$ \operatorname{Li}_{2}(r_2^6) - 3 \operatorname{Li}_{2}(r_2^4) - 
 8 \operatorname{Li}_{2}(r_2^3) + 
 21 \operatorname{Li}_{2}(r_2^2)
 + 24 \operatorname{Li}_{2}(r_2) - 11 \zeta(2) = -6 \log^{2}(r_2). $$

\end{proof}

\remark The presence of the ``6" in the middle of the base equation $r^4 + r^3 - 6 r^2 + r + 1$ follows the same pattern as the ``3" in the base equation $ x^3 + 3x
^2 = 1$ of the Lewin-Loxton identities. 

\remark  Whereas, above, the third Loxton identity can algebraically be proven with an array of 14 applications of the 5-term identity, proving these in a similar manner would presumably similarly require many terms, and it's not evident how one would proceed in such a manner.

\subsection{Sextic ladders}
 
Using  \Cref{firstformatladder}, we derive sextic degree base ladders.

\begin{theorem}\label{theoremnew6term}
 Let $h$ denote the unique root in the interval $(0, 1)$ of $x^6-3 x^5+5 x^4-8 x^3+5 x^2-3 x+1 = 0$. Then the ladder relation 
\begin{multline*}
 \operatorname{Li}_2\left(h^8\right) + 3 \operatorname{Li}_2\left(h^6\right) - 
 8 \operatorname{Li}_2\left(h^4\right) - 8 \operatorname{Li}_2\left(h^3\right) + 
 3 \operatorname{Li}_2\left(h^2\right) - 8 \operatorname{Li}_2(h)-6 \ln ^2(h) = \\ 
 -5 \zeta(2) 
\end{multline*}
 holds. 
\end{theorem}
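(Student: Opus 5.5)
Following the pattern of the $\mathbb{Q}(\sqrt{33})$ quartic family, I will specialize \Cref{firstformatladder} at an exceptional value of $u$ where the two conjugate pairs of radical arguments compress into powers of a single radius $h$.

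\textbf{Step 1: choose the matching condition.} Instead of the cube-root-of-unity constraint used for the quartic ladders, I will impose a \emph{reciprocal} matching analogous to the Bytsko-type computations. The first option I will try is
\[
\frac{u^2+\sqrt{4-3u^2}-2}{u(u+1)}=h^{a}, \qquad \frac{u^2-\sqrt{4-3u^2}-2}{u(u+1)}=-h^{b},
\]
with $\frac{1-u}{1+u}$ also a power of $h$. The exponents appearing in the statement ($h^8,h^6,h^4,h^3,h^2,h$), together with the $\frac32 \operatorname{L}\bigl((\frac{1-u}{1+u})^2\bigr)$ term, suggest that $\frac{1-u}{1+u}=h^2$, so that its square gives $h^4$. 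The product of the conjugate pair is rational in $u$, namely $\frac{(u^2-2)^2-(4-3u^2)}{u^2(u+1)^2}=\frac{u^2-1}{(u+1)^2}=\frac{u-1}{u+1}$. The pair therefore has product $-h^2$.

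\textbf{Step 2: pin down the exponents and derive the sextic.} It then suffices to require that one member of the pair be, up to sign, $h^{a}$ with $a+b=2$ (for example $a=3$, $b=-1$), and to use inversion $\operatorname{L}(1/x)$ and \eqref{duplicationformula} to bring all arguments into $(0,1)$. To confirm which $(a,b)$ and sign choices give the stated polynomial, I will compute $\operatorname{Res}_u$ of the algebraic relations after substituting $u=\frac{1-h^2}{1+h^2}$. I expect a factor $x^6-3x^5+5x^4-8x^3+5x^2-3x+1$, which is palindromic and consistent with a reciprocal pairing. I will verify this factor numerically at the root in $(0,1)$ before proceeding.

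\textbf{Step 3: assemble the ladder.} Each $\operatorname{L}$ in \eqref{eq:ladder2} becomes $\operatorname{L}(\pm h^k)$ or $\operatorname{L}(h^{-k})$. The reductions are:
\begin{itemize}
\item negative arguments are rewritten via $\operatorname{Li}_2(-x)=\tfrac12\operatorname{Li}_2(x^2)-\operatorname{Li}_2(x)$, which produces the $h^8$ and $h^6$ terms from $-h^4$ and $-h^3$;
\item inverses are removed with the inversion formula, which contributes $\log^2 h$ and $\pi^2$ pieces;
\item $\operatorname{L}$ is converted to $\operatorname{Li}_2$ using \eqref{RogersLdefinition}.
\end{itemize}
Collecting coefficients should give $1,3,-8,-8,3,-8$.

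\textbf{Main obstacle.} The hardest step is the bookkeeping of the elementary logarithm terms and the rational multiple of $\pi^2$. The terms $\ln x\ln(1-x)$ must collapse to $-6\ln^2 h$, which requires expressing $1-h^k$ multiplicatively through $h$ and the conjugates using the base equation, as was done in the $\pi/9$ proof. The constant in \Cref{firstformatladder}, which is only asserted to lie in $\pi^2\mathbb{Q}$, together with the branch choices, must be fixed to produce $-5\zeta(2)$. I will determine this constant by a high-precision numerical evaluation and then justify it by tracking the principal branches for the specific real $u$ in use.
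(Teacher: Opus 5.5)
Your overall strategy is the paper's: specialize \Cref{firstformatladder} at an algebraic $u$ where every argument becomes $\pm h^{k}$, then clean up with duplication and inversion. The gap is that the normalization you fix in Step~1 is wrong, and with it Steps~1--2 cannot produce this ladder. The conjugate radical pair has product $\frac{u-1}{u+1}$. Setting $\frac{1-u}{1+u}=h^2$ therefore forces the pair to be $\{\epsilon h^{a},-\epsilon h^{2-a}\}$. After duplication and inversion, the only available exponents are $2,4,|a|,2|a|,|2-a|,2|2-a|$.
\begin{itemize}
\item An $h^{1}$ term needs $a\in\{-1,1,3\}$, and none of these gives $h^{8}$.
\item Your own example $a=3$ yields exponents $\{1,2,3,4,6\}$ and the base $h^6-h^5-h^4-6h^3-h^2-h+1$, not the stated sextic. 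So the resultant you plan to compute will not contain the factor you expect.
\item The non-radical terms land on the wrong coefficients: $-4\operatorname{L}(h^2)+\tfrac32\operatorname{L}(h^4)$ puts $-8$ on $\operatorname{Li}_2(h^2)$ and $+3$ on $\operatorname{Li}_2(h^4)$, the reverse of the target.
\item Your Step~3, which obtains $h^8,h^6$ from $-h^4,-h^3$, presupposes a pair with product $-h$. That contradicts Step~1.
\end{itemize}

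The matching that works, and the one the paper uses, is $h=\frac{1-u}{1+u}$ with $\frac{u^2+\sqrt{4-3u^2}-2}{u(u+1)}=h^{4}$ (equivalently $\frac{u^2-\sqrt{4-3u^2}-2}{u(u-1)}=h^{-4}$). The conjugate is then forced to be $-h^{-3}$. With $u=\frac{1-h}{1+h}$ and $h\in(0,1)$, one has $\sqrt{4-3u^2}=\frac{\sqrt{h^2+14h+1}}{1+h}$. The condition becomes $(1+h)\sqrt{h^2+14h+1}=2(1-h)h^4+h^2+6h+1$; both sides are positive, so squaring introduces no spurious root. Squaring gives
\[
h^8-h^7-h^5-6h^4-h^3-h+1=(h+1)^2\left(h^6-3h^5+5h^4-8h^3+5h^2-3h+1\right)=0 .
\]
\Cref{firstformatladder} then reads $\operatorname{L}(-h^4)-3\operatorname{L}(h^4)-4\operatorname{L}(h)+\operatorname{L}(h^{-3})-3\operatorname{L}(-h^{-3})+\tfrac32\operatorname{L}(h^2)\in\pi^2\mathbb{Q}$. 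Duplication plus inversion give, after doubling, exactly the coefficients $1,3,-8,-8,3,-8$ on $h^8,h^6,h^4,h^3,h^2,h$. Once this substitution is made, your plan for the logarithmic terms and for pinning the constant $-5\zeta(2)$ (numerics plus branch tracking at real $u\in(0,1)$) is reasonable.
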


\begin{proof}
 We exploit the property whereby the following equalities 
\begin{equation*}\label{commonsolutionfor6}
 \frac{u^2 - \sqrt{4 - 3u^2} - 2}{u(u - 1)} = \left( \frac{u + 1}{1 - u} \right)^4 
 \ \ \ \text{and} \ \ \ 
 \frac{u^2 + \sqrt{4 - 3u^2} - 2}{u(u + 1)} = \left( \frac{1 - u}{1 + u} \right)^4 
\end{equation*}
 have a common solution.  
\end{proof}

\noindent In general, we can construct ladders with arbitrary degree base equations of the form below (where $m=4$ recovers the above example):  \begin{equation*}
(h - 1) h^{2 m-1} = (h (h + 6) + 1) h^{m-1} +h - 1 
\end{equation*} 

\noindent \remark{Unfortunately, none of these ladders are new in the sense that they follow from \#6 in the above table. The 15-term identity is very broad and encompasses many possible ladder relations. However, this  does represent a new derivation, in contrast to the Clausen function approach mentioned by Lewin. This underscores the inherent difficultly of finding genuine new ladder families.}

\section{Empirical Quartic Ladders via PSLQ Search}
\label{sec:empirical-quartic}

The constructive methods of Sections~2--4 derive dilogarithm ladders from
base equations that arise as outputs of the integral-to-${}_4F_3$
correspondence and the Radius Lemma. It is natural to ask whether ladders
exist for base equations that do \emph{not} arise from this construction, yet exhibit the same pattern as in the $\mathbb{Q}(\sqrt{33})$ family.
\\
\\
\noindent We report four new ladders with a systematic numerical search,
using the PSLQ integer relation algorithm, over the family of palindromic
quartic units
\begin{equation}
  x^4 + a x^3 + b x^2 + a x + 1 = 0, \qquad a,b \in \mathbb{Z},
  \label{eq:palindromic-quartic}
\end{equation}
restricted to those $(a,b)$ for which \eqref{eq:palindromic-quartic} is
irreducible over $\mathbb{Q}$ and has four real roots. Writing
$y = x + 1/x$, \eqref{eq:palindromic-quartic} reduces to the resolvent
quadratic $y^2 + ay + (b-2) = 0$; four real roots occur precisely when
this quadratic has two real roots $y_1, y_2$ with $|y_i| > 2$.

\subsection{Search procedure}

For each admissible $(a,b)$ we computed the real root $u \in (0,1)$ of
\eqref{eq:palindromic-quartic} to $150$--$500$ decimal digits and applied
PSLQ to the vector
\begin{equation}
  \bigl(\operatorname{Li}_2(u), \operatorname{Li}_2(u^2), \ldots,
  \operatorname{Li}_2(u^N),\ \log^2 u,\ \zeta(2)\bigr)
\end{equation}
for $N$ ranging up to $40$. The search covered:
\begin{itemize}
  \item over $1{,}100$ palindromic quartic units with $|a| \le 40$,
    $|b|\le 70$;
  \item all four real embeddings of each candidate quartic with cyclic
    ($C_4$) Galois group found in this range;
  \item degree-$5$, degree-$6$, and degree-$8$ analogues, including
    single-family cyclic trigonometric units
    $\tfrac12\sec(2\pi k/p)$, $2\cos(2\pi k/p)$ for $p = 11, 13, 28$,
    and products of independent quadratic and cyclic-cubic units
    (e.g.\ $\tan(\pi/8)\cdot 2\cos(2\pi/7)$), together with rational
    rescalings of all of the above.
\end{itemize}
Every candidate outside degree $4$ returned no relation. Within degree
$4$, exactly two irreducible palindromic quartic units were found to
admit a ladder: one with Galois group $V_4$, the other with Galois
group $C_4$. Both were verified to $400$--$500$ digits, far beyond the
threshold at which a PSLQ hit could plausibly be a numerical
coincidence.

\subsection{Four new ladders}

Both base equations turn out to have elementary trigonometric roots.  For the $V_4$ case,
\begin{equation}
  u = 2\tan\!\left(\frac{\pi}  {8}\right)\cos\!\left(\frac{\pi}{5}\right), \quad v = -2\tan\!\left(\frac{\pi}  {8}\right)\cos\!\left(\frac{2\pi}{5}\right)
  \label{eq:u-sqrt10}
\end{equation}
satisfies $x^4+2x^3-7x^2+2x+1=0$ and generates $\mathbb{Q}(\sqrt{10})$;
the compositum structure ($\tan(\pi/8)\in\mathbb{Q}(\sqrt2)$,
$\cos(\pi/5)\in\mathbb{Q}(\sqrt5)$, independent) accounts for the
non-cyclic Galois group. 
\\
\\
For the $C_4$ case,
\begin{equation}
  u = \tan\!\left(\frac{3\pi}{20}\right), \quad  v=-\tan\!\left(\frac{\pi}{20}\right)
  \label{eq:u-sqrt5}
\end{equation}
satisfies $x^4+4x^3-14x^2+4x+1=0$ and generates $\mathbb{Q}(\sqrt5)$.

\medskip
\noindent With \eqref{eq:u-sqrt10}, we have:
\begin{align}
  96\operatorname{Li}_2(u) - 90\operatorname{Li}_2(u^2)
  - 24\operatorname{Li}_2(u^3) + 9\operatorname{Li}_2(u^4)
  + 16\operatorname{Li}_2(u^6) - 2\operatorname{Li}_2(u^{12})
  - 12\log^2 u &= 17\,\zeta(2), \label{eq:ladder1}\\
  96\operatorname{Li}_2(v) - 90\operatorname{Li}_2(v^2)
  - 24\operatorname{Li}_2(v^3) + 9\operatorname{Li}_2(v^4)
  + 16\operatorname{Li}_2(v^6) - 2\operatorname{Li}_2(v^{12})
  - 12\log^2|v| &= -31\,\zeta(2). \label{eq:ladder2}
\end{align}

\medskip
\noindent With 
\eqref{eq:u-sqrt5}, we have:
\begin{align}
  34\operatorname{Li}_2(u) - 47\operatorname{Li}_2(u^2)
  + 6\operatorname{Li}_2(u^4) - 2\operatorname{Li}_2(u^5)
  + \operatorname{Li}_2(u^{10}) - 2\log^2 u &= 4\,\zeta(2),
  \label{eq:ladder3}\\
  34\operatorname{Li}_2(v) - 47\operatorname{Li}_2(v^2)
  + 6\operatorname{Li}_2(v^4) - 2\operatorname{Li}_2(v^5)
  + \operatorname{Li}_2(v^{10}) - 2\log^2|v| &= -8\,\zeta(2).
  \label{eq:ladder4}
\end{align}

\noindent We emphasize that \eqref{eq:ladder1}--\eqref{eq:ladder4} are
\emph{numerically} established only, to a precision at which
coincidence is not a realistic explanation, but for which no
derivation via the methods of Sections~2--4 (or any other method
known to us) has been found.

\subsection{Why additional ladders are unlikely}
\label{sec:why-unlikely}

To recap, a ladder is a finite combination of dilogarithm values at powers of a single algebraic number $u$,

\[
  \sum_{j} A_j \operatorname{Li}_2(u^j) + B \log^2 u \;\in\; \pi^2\mathbb{Q}, \qquad A_j, B \in \mathbb{Q},
\]

\noindent that is, a combination which collapses to a rational multiple of $\pi^2$ rather than remaining a generic transcendental number. Whether such a combination exists depends on a linear-algebra condition: the corresponding wedge products $(u^j)\wedge(1-u^j)$ must sum to zero inside a certain finite-dimensional vector space attached to the field $F=\mathbb{Q}(u)$.
\\
\\
\noindent That space has dimension

\begin{equation}
  \operatorname{rank} = (n-1) + |S|, \qquad D = \binom{\operatorname{rank}}{2},
  \label{eq:dim-formula}
\end{equation}

\noindent where $n=[F:\mathbb{Q}]$ and $S$ is the set of primes dividing $N_{F/\mathbb{Q}}(1-u^k)$ for the exponents $k$ actually used in the ladder.
\\
\\
\noindent If the number of wedge vectors exceeds the dimension $D$, linear dependence is guaranteed by a simple counting argument: more vectors than dimensions forces linear dependence. If $m\le D$, no relation is forced at all, and finding one is a genuine coincidence specific to that field's arithmetic.
\\
\\
\noindent Every ladder in this paper, old or new, has $m$ far smaller than $D$ (Table~\ref{tab:summary}) -- none are guaranteed hits. Since $D$ grows like $(n+|S|)^2$, doubling the degree roughly quadruples the size of the space a relation must vanish in, while the number of exponents one can realistically test with PSLQ grows only modestly. This provides a natural explanation for why our searches are unsuccessful for $n>4$. Engineering a base equation to keep small primes ``clean'' -- forcing $f(1)=-1$ so that $1-u$ is automatically a unit, as Watson's cubic achieves for free at $k=1,2$ -- helps, but cannot overcome the growth of $D$ once $n\ge5$.
\\
\\
\noindent We tested this directly. Beyond the quartic family of Sections~5.1--5.2, we searched: $62$ unit quintics and $13$ unit palindromic sextics engineered so that $f(1)=-1$ (all real roots, irreducible); the natural cyclotomic and Gaussian-period analogues at degree $5$--$8$ (direct real subfields $\mathbb{Q}(\zeta_p)^+$ where a prime $p$ with $(p-1)/2=n$ exists, e.g.\ $p=11,13,17$, and Gaussian periods otherwise, e.g.\ $p=29,31,37$); and, for $\mathbb{Q}(\sqrt5)$, whether enlarging $S$ could ever bring $m$ up to $D$ -- it cannot: admitting more primes grows $D$ faster than it grows the supply of usable exponents. Every one of these searches, run with PSLQ up to $N=30$--$44$, returned no ladder.

\begin{table}[H]
\centering
\begin{tabular}{l|c|c|c|l}
Field & $n$ & $m$ & $D$ & Result \\ \hline
Watson $\pi/7$          & 3 & 2 & 1  & ladder (forced, $m>D$) \\
$\mathbb{Q}(\sqrt{33})$ & 4 & 5 & 10 & ladder (lucky) \\
$\mathbb{Q}(\sqrt{10})$ & 4 & 6 & 15 & ladder (lucky) \\
$\mathbb{Q}(\sqrt5)$    & 4 & 5 & 21 & ladder (lucky) \\
degree 5 (cyclotomic, $p=11,31$) & 5 & up to 44 & $\ge15$, growing & none found \\
degree 6 (cyclotomic, $p=13,37$) & 6 & up to 44 & $\ge10$, growing & none found \\
degree 7 (Gaussian period, $p=29$) & 7 & up to 40 & $\ge15$, growing & none found \\
degree 8 (cyclotomic, $p=17$)      & 8 & up to 30 & $\ge21$, growing & none found
\end{tabular}
\caption{Exponents used ($m$) versus wedge-space dimension ($D$) for every ladder and every higher-degree search in this paper.}
\label{tab:summary}
\end{table}

\noindent None of this rules ladders out at higher degree in principle: Borel's theorem guarantees that some identity exists in every totally real field, since its Bloch group is entirely torsion. What our results show is narrower and more practical: that identity need not take the restricted form of a ladder above (powers of one generator, small coefficients, modest exponent reach), and the chance that it does falls off sharply once $D$ outgrows what a handful of exponents can plausibly reach.
\\
\\
\noindent We therefore regard the quartic ladders $\mathbb{Q}(\sqrt{33})$, $\mathbb{Q}(\sqrt{10})$,$\mathbb{Q}(\sqrt{5})$ as isolated rather than the first members of a
larger family, and higher-degree ladders as theoretically possible
but, on this evidence, increasingly improbable to find.

\section{Acknowledgements}
 The author thanks John Maxwell Campbell for help with the exposition in this paper and for help with the typesetting and formatting 
 for this paper. 

\section{Disclosure}
 The author received   no external funding. No data were created or analyzed in this study. There are no conflicts of interests.

\section{Appendix: Evaluating the Sextic Integral}\label{sec:appendix}
To construct  \Cref{firstformatladder}, by evaluating the integrals, the below terms are combined in the manner $A + B + J + C + H + D =   K$, defined as: 
\begin{theorem}\label{maintheorem}
  
\begin{align*}
 A & := \operatorname{Li}_2\left(\frac{\sqrt{4 - 3u^2} + u^2 - 2}{u (u + 1)}\right) 
 - \operatorname{Li}_2\left(\frac{\sqrt{4 - 3u^2} + u^2 - 2}{(u - 1) u}\right), \\ 
 B & := 
 \frac{1}{2} \ln^2\left(\frac{- \sqrt{4-3 u^2}+u+2}{4} \right) - 
 \frac{1}{2} \ln ^2\left(\frac{-  \sqrt{4-3 u^2 } - u^2 +2}{u 
 (u+1)}\right) + \\ 
 & \quad \frac{1}{2} \ln ^2\left(\frac{ \sqrt{4-3 u^2 } + u + 2}{4} \right)-\frac{\pi ^2}{6}, \\ 
 J & := \frac{1}{4} \ln^2 \left( \frac{-4}{- \sqrt{4-3 u^2 } + u + 2} \right) + 
 \frac{1}{4} \ln^2 \left( \frac{-4}{ \sqrt{4-3 u^2 } + u + 2} \right) - \\ 
 &\quad \frac{1}{4} \ln^2 \left( \frac{-4}{- \sqrt{4-3 u^2 } - u + 2} \right) - \frac{1}{4} 
 \ln^2 \left( \frac{-4}{ \sqrt{4-3 u^2 } - u + 2} \right), \\ 
 C & := - \frac{1}{2} \ln \left( \frac{ -  \sqrt{4-3 u^2} + u + 
 2}{4} \right) \ln \left( \frac{  \sqrt{4-3 u^2 } - u + 2}{4} \right) - \\ 
 & \quad \frac{1}{2} \ln \left( \frac{ \sqrt{4-3 u^2 } + u + 2}{4} \right) 
 \ln \left( \frac{-  \sqrt{4-3 u^2} - u + 2}{4} \right) + \frac{\pi^2}{6}, \\ 
 H & := 
 \frac{1}{4} \ln^2\left(\frac{2}{u-1}\right)-\frac{1}{4} \ln ^2\left(-\frac{2}{u+1}\right)+\frac{1}{2} \ln \left(\frac{-u + 1}{2} \right) 
 \ln \left(\frac{u+1}{2}\right) - \\ 
 & \quad \frac{\pi i}{2} \ln \left(\frac{u + 
 1}{u-1}\right)-\ln (2) \ln \left(\frac{u+1}{u-1}\right)-\frac{\pi^2}{12}, \\ 
 D & := \operatorname{Li}_2 \left( \frac{1 - u}{2} \right) 
 + \ln(2) \ln \left( \frac{u - 1}{u + 1} \right) + \left( 2 \ln(2) + \frac{\pi i}{2} \right) \ln \left( \frac{u + 
 1}{u - 1} \right), \\ 
 K & := \frac{1}{2} \operatorname{Li}_2 \left( \frac{2}{- \sqrt{4 - 3 u^2} + u} \right) - 
 \frac{1}{2} \operatorname{Li}_2 \left( \frac{2}{\sqrt{4 - 3 u^2} - u} \right) \\ 
 &\quad - \frac{1}{2} \operatorname{Li}_2 \left( \frac{-2}{\sqrt{4 - 3 u^2} + u} \right) + 
 \frac{1}{2} \operatorname{Li}_2 \left( \frac{2}{\sqrt{4 - 3 u^2} + u} \right) + 
 \frac{1}{2} \operatorname{Li}_2 \left( \frac{-1}{u} \right) - 
 \frac{1}{2} \operatorname{Li}_2 \left( \frac{1}{u} \right). 
\end{align*} 
 Then, for all $u \in \mathbb{C} \setminus \mathbb{R}$ such that $\Re u \geq 0$ and such that the power series 
 in \eqref{displays3} converges, we have that 
\begin{equation}\label{mainABCD}
 A + B + J + C + H + D = s_3 = K, 
\end{equation}
 and that 
\begin{equation}\label{MequalsAB}
 M = A + B = 
 - \operatorname{Li}_{2}\left( \frac{\sqrt{4 - 3 u^2} - u + 2}{4} \right) 
 - \operatorname{Li}_{2}\left( \frac{-\sqrt{4 - 3 u^2} - u + 2}{4} \right), 
\end{equation}
 writing 
\begin{equation}\label{displays3}
 s_3 = \frac{2}{3u(u^2-1)} \sum_{n=0}^{\infty} \left( \frac{1}{u(u^2-1)} \right)^{2n} \left( \frac{4}{27} \right)^{n} 
 \left[ \begin{matrix} \frac{1}{2}, \frac{1}{2}, 1 
 \vspace{1mm} \\ 
 \frac{5}{6}, \frac{7}{6}, \frac{3}{2} \end{matrix} \right]_{n}. 
\end{equation}
 Moreover, for $u \in \mathbb{R}_{\geq 0}$ such that \eqref{displays3} converges, by letting $v \in \mathbb{C} \setminus \mathbb{R}$ 
 approach $u$ and taking the corresponding limits on all sides of the equalities among \eqref{mainABCD}, these limits are equal, 
 and similarly for \eqref{MequalsAB}. 
\end{theorem}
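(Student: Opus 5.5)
The plan is to evaluate the single integral $w_1$ of \eqref{eq:w1} (equivalently $w_2$ of \eqref{eq:w2}) in two ways, once as the ${}_4F_3$ series and once by partial fractions, and to identify the resulting pieces with $A,B,J,C,H,D$ and $K$.

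\emph{Step 1: the series side.} Set $h=\frac{i}{u(1-u^2)}$, so that $h^2=-\frac{1}{u^2(1-u^2)^2}$. The beta-integral computation in the proof of \Cref{firstformatladder} gives $w_1=w_2=\frac{1}{h}\cdot$ (the series $s_1$ of \eqref{displays1}). Substituting this $h$ turns $(-\tfrac{4}{27})^n h^{2n}$ into $(\tfrac{4}{27})^n (u(u^2-1))^{-2n}$. The prefactor $1/h$, combined with the factor $u^2(1-u^2)^2$ coming from clearing denominators in the integrand, produces $s_3$ of \eqref{displays3}, up to the normalisation constant, which I would fix by comparing the $n=0$ terms. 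This settles the middle equality of \eqref{mainABCD}, with absolute convergence as stated.

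\emph{Step 2: partial fractions.} With this $h$, the denominator of $w_1$ equals $(u^2(1-u^2)^2-x^2(1-x^2)^2)/(u^2(1-u^2)^2)$, and by \eqref{productforpartial} it factors into the linear factors $x\mp u$ and the roots of $x^2\pm ux+u^2-1$, namely $x=\frac{\mp u\pm\sqrt{4-3u^2}}{2}$. Decompose $\frac{3x^2-1}{\cdots}$ into simple fractions $\frac{c_\rho}{x-\rho}$ over these six roots $\rho$. Then use
\[
\int_0^1\frac{\ln x}{x-\rho}\,dx=\operatorname{Li}_2\!\left(\frac{1}{\rho}\right),
\]
and, for the $w_2$ form, write $\ln(1-x^2)=\ln(1-x)+\ln(1+x)$ and use
\[
\int_0^1\frac{\ln(1\pm x)}{x-\rho}\,dx
\]
expressed through $\operatorname{Li}_2\!\left(\frac{1\pm 1}{1\pm\rho}\right)$-type terms plus $\ln\cdot\ln$ products. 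The $\ln x$ integrals at $\rho=\pm u$ and at the four conjugate roots give exactly the six dilogarithms in $K$, since $\frac{1}{\rho}=\frac{2}{\mp u\pm\sqrt{4-3u^2}}$. The $\ln(1\pm x)$ integrals produce the arguments $\frac{2\mp u\pm\sqrt{4-3u^2}}{4}$ and $\frac{1-u}{2}$, i.e. the dilogarithms in $M$ and $D$, together with logarithm products. I would group these terms as the paper does: $A+B=M$, $J$ and $C$ the logarithm products from the quadratic factors, and $H$, $D$ those from the linear factors $x\mp u$.

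\emph{Step 3: the identity $M=A+B$ in \eqref{MequalsAB}.} Convert each $\operatorname{Li}_2\!\left(\frac{\pm\sqrt{4-3u^2}-u+2}{4}\right)$ by Euler reflection \eqref{Eulerreflectionidentity} and Landen \eqref{Landenformula} into the arguments of $A$. The algebraic fact needed is
\[
1-\frac{\sqrt{4-3u^2}-u+2}{4}=\frac{2+u-\sqrt{4-3u^2}}{4},
\]
whose reciprocal-type rewriting gives $\frac{\sqrt{4-3u^2}+u^2-2}{u(u\pm1)}$. After that one matches the squared logarithms against $B$.

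\emph{Main obstacle: branches.} The real difficulty is branch bookkeeping. The $\pm\frac{\pi i}{2}\ln\frac{u+1}{u-1}$ terms in $H$ and $D$, and the $\pm\frac{\pi^2}{6}$ constants, depend on which side of the cuts each root $\rho$ lies. I would first prove everything for $u$ in the open quarter-plane $\Re u>0$, $\Im u>0$, where the four roots stay off $[0,1]$ and the principal branches are continuous. The identity is analytic there, so it suffices to check it on a small open set, for instance numerically near $u=1+i$, together with a derivative check in $u$: differentiating both sides yields only rational functions of $u$ and $\sqrt{4-3u^2}$, which can be verified symbolically. The lower half-plane then follows by conjugation symmetry. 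Finally, the real-$u$ limits follow by continuity from the $v\to u$ limits, provided one checks that no argument crosses its cut along the approach. I expect this check to be the fiddly part, especially at $u<1$, where $\frac{u+1}{u-1}<0$.
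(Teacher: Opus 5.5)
Your route is the paper's. You evaluate the $\ln x$ form of $w_1$ and the $\ln(1-x^2)$ form of $w_2$ by partial fractions over the six roots of $P(x)=u^2(1-u^2)^2-x^2(1-x^2)^2$, identify both with the same ${}_4F_3$ series, and obtain $M=A+B$ from Landen and Euler reflection. \textbf{The step that fails is the final ``match against $B$'' in Step~3, because the statement as printed is wrong there.}

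Put $p=\frac{2-u+\sqrt{4-3u^2}}{4}$ and $q=\frac{2-u-\sqrt{4-3u^2}}{4}$, so $M=-\Li_2(p)-\Li_2(q)$. The arguments of $A$ are $\frac{q}{q-1}$ and $\frac{p-1}{p}$. Landen on $q$, Landen on $1-p$, and reflection on $p$ give
\[
M=A+\tfrac12\ln^2(1-p)-\tfrac12\ln^2\frac{p}{1-p}+\tfrac12\ln^2(1-q)-\frac{\pi^2}{6},\qquad \frac{p}{1-p}=\frac{2-u^2+\sqrt{4-3u^2}}{u(u+1)}=\left(\frac{2-u^2-\sqrt{4-3u^2}}{u(u-1)}\right)^{-1}.
\]
The printed $B$ instead contains $\ln^2\frac{2-u^2-\sqrt{4-3u^2}}{u(u+1)}=\ln^2\frac{q}{1-q}$. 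Equal squared logarithms would force $\frac{q}{1-q}=\bigl(\frac{p}{1-p}\bigr)^{\pm1}$, but $p\neq q$ and $\frac{p}{1-p}\cdot\frac{q}{1-q}=\frac{u-1}{u+1}\neq1$. Concretely, at $u=i$ (which meets every hypothesis) the printed $A+B$ equals $M-\frac{3\pi i}{2}\ln\frac{3+\sqrt7}{\sqrt2}\approx M-6.52\,i$. So both $M=A+B$ and $A+B+J+C+H+D=s_3$ are false as printed. With the corrected logarithm the numbers match; for instance at $u=i$ one gets $M+J+C+H+D=s_3=K\approx0.3313\,i$. Your plan, like the paper's sketch, asserts a match that does not exist. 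You must replace $u(u+1)$ by $u(u-1)$ in that denominator, or equivalently flip the sign of the root.

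\textbf{Step 1 normalisation.} In fact $w_1=s_1$ exactly: the prefactor $1/(2h)$ cancels against $\int_0^h g^{2n}\,dg=h^{2n+1}/(2n+1)$. With your extra $1/h$ you would be off by a factor $i$, and there is no free constant to fix by comparing $n=0$ terms. The residue of $(3x^2-1)/P$ at each root $\rho$ is $1/(2\rho(1-\rho^2))=\pm1/(2u(1-u^2))$. This gives $\int_0^1\frac{(3x^2-1)\ln x}{P(x)}\,dx=\frac{K}{u(u^2-1)}=\frac{s_3}{u(u^2-1)}$ exactly.

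\textbf{Branches.} Your premise that all principal branches are continuous on the open quarter-plane is false. On the arc $3(\operatorname{Re}u)^2-(\operatorname{Im}u)^2=1$ (e.g.\ $u=1+i\sqrt2$, where $\sqrt{4-3u^2}=3-i\sqrt2$) one has $2+u+\sqrt{4-3u^2}=2+4\operatorname{Re}u$ and $2-u-\sqrt{4-3u^2}=2-4\operatorname{Re}u<0$. Hence $\ln\frac{-4}{2+u+\sqrt{4-3u^2}}$ in $J$ and $\ln\frac{2-u-\sqrt{4-3u^2}}{4}$ in $C$ sit on their cuts there. Their jumps $\pm i\pi\ln\frac{1+2\operatorname{Re}u}{2}$ cancel in $J+C$, and only this makes the total analytic. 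Without that observation, a check near $1+i$ says nothing about points above the arc.

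Two further claims in that paragraph do not hold. Differentiating a dilogarithm identity gives logarithms times algebraic functions, not rational functions. A numerical check is not a proof.

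\textbf{A cleaner route.} You need neither of those checks. Because the residues at $\rho$ and $-\rho$ are opposite, the $w_2$ side equals $\frac12\sum_\rho\int_{-1}^{1}\frac{\ln(1-x^2)}{x-\rho}\,dx$, summed over the roots of $x^3-x+u-u^3$. This is $\frac12\sum_\rho\bigl[\Li_2\bigl(\frac{2}{1+\rho}\bigr)-\Li_2\bigl(\frac{2}{1-\rho}\bigr)\bigr]$ plus $\ln 2$-terms, which are controlled by $\prod_\rho\frac{1+\rho}{\rho-1}=1$. Inversion and reflection then give $M+J+C$ from the two quadratic roots and $H+D$ from $\rho=u$; the explicit $\pi i$ and $\ln2$ terms of $H$ and $D$ cancel.

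\textbf{Real $u$.} The real-$u$ clause needs no cut analysis. Both sides agree for nonreal $v$, so their limits agree.
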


\begin{proof}
 This follows from our sextic integral identities, by applying partial fraction decomposition, and by then applying this partial fraction decomposition within the appropriate integrands in 
 \eqref{eq:w1} and \eqref{eq:w2}, and by then evaluating the indefinite integrals corresponding to the definite integrals we 
 obtain by expanding the aforementioned integrands. More explicitly, in the latter equality in 
 \eqref{eq:w1}, we set $-\frac{i}{u \left(u^2-1\right)}$, yielding 
\begin{multline*}
 \int_{0}^{1} \frac{\left(3 x^2 - 
 1\right) \ln (x)}{\left(u^2+u x+x^2-1\right) \left(u^2-u x+x^2-1\right) \left(u^2-x^2\right)} \, dx = \\ 
 \frac{2}{3 (u-1)^2 u^2 (u+1)^2} \sum_{n=0}^{\infty} 
 \left( \frac{1}{u(u^2-1)} \right)^{2n} \left( \frac{4}{27} \right)^{n} 
 \left[ \begin{matrix} \frac{1}{2}, \frac{1}{2}, 1 
 \vspace{1mm} \\ 
 \frac{5}{6}, \frac{7}{6}, \frac{3}{2} \end{matrix} \right]_{n}, 
\end{multline*}
 and, by applying partial fraction decomposition 
 to the above integrand, an indefinite integral 
 corresponding to the resultant integrand is equivalent to 
\begin{multline*}
 \frac{1}{2 u-2 u^3} \Bigg( \operatorname{Li}_2\left(-\frac{2 x}{u-\sqrt{4-3 u^2}}\right) - 
 \operatorname{Li}_2\left(\frac{2 x}{u-\sqrt{4-3 u^2}}\right)+\operatorname{Li}_2\left(-\frac{2
 x}{u+\sqrt{4-3 u^2}}\right) - \\ 
 \operatorname{Li}_2\left(\frac{2 x}{u+\sqrt{4-3
 u^2}}\right)-\operatorname{Li}_2\left(-\frac{x}{u}\right)+\operatorname{Li}_2\left(\frac{x}{u}\right) + \ln (x) \ln \left(\frac{\sqrt{4-3 u^2}-u-2
 x}{\sqrt{4-3 u^2}-u}\right) - \\ 
 \ln (x) \ln \left(\frac{\sqrt{4-3 u^2}+u-2 x}{\sqrt{4-3 u^2}+u}\right)-\ln (x) \ln \left(\frac{\sqrt{4-3
 u^2}-u+2 x}{\sqrt{4-3 u^2}-u}\right) + \\ 
 \ln (x) \ln \left(\frac{\sqrt{4-3 u^2}+u+2 x}{\sqrt{4-3 u^2}+u}\right)+\ln (x) \ln
 \left(\frac{u-x}{u}\right)-\ln (x) \ln \left(\frac{u+x}{u}\right) \Bigg). 
\end{multline*}
 By then setting $x \to 1$ and $x \to 0$, we obtain an equivalent version of the desired equality $s_3 = K$, 
 and similarly for 
 $ A + B + J + C + H + D = s_3$ and for $ M = A + B$, for the case whereby
 $u \in \mathbb{C} \setminus \mathbb{R}$ satisfies the specified convergence condition, 
 and an analytic continuation then gives us the desired result for real arguments. 
\end{proof}

 \

\end{document}